\newtheorem{thm}{Theorem}[section]
\newtheorem{prop}[thm]{Proposition}
\newtheorem{lem}[thm]{Lemma}
\theoremstyle{definition}
\newtheorem{example}[thm]{Example}
\newtheorem{definition}[thm]{Definition}
\newtheorem*{ack}{Acknowledgements}
\newcommand{\im}{\textup{im }}
\newcommand{\Z}{\mathbb Z}
\newcommand{\Comment}[1]{}
\newcommand{\commentout}[1]{{}}
\newcommand{\idiot}[1]{\vspace{5 mm}\par \noindent
\framebox{\begin{minipage}[c]{0.95 \textwidth}
\tt #1 \end{minipage}}\vspace{5 mm}\par}
\begin{document}

\title{On $d$-dimensional cycles and the vanishing of simplicial homology}

\author{ E. Connon\thanks{Research supported by a Killam scholarship.} }

\maketitle

\begin{center} \it \small
Department of Mathematics and Statistics, Dalhousie University, Canada
\end{center}

\begin{abstract}
In this paper we introduce the notion of a $d$-dimensional cycle which is a homological generalization of the idea of a graph cycle to higher dimensions. We examine both the combinatorial and homological properties of this structure and use these results to describe the relationship between the combinatorial structure of a simplicial complex and its simplicial homology.  In particular, we show that over any field of characteristic 2 the existence of non-zero $d$-dimensional homology corresponds exactly to the presence of a $d$-dimensional cycle in the simplicial complex.  We also show that $d$-dimensional cycles which are orientable give rise to non-zero simplicical homology over any field.
\end{abstract}

{\bf Keywords:} \ simplicial homology, support complex, $d$-dimensional cycle, graph cycle, homological $d$-cycle, pseudo $d$-manifold, non-zero homology, simplicial complex \\


\section{Introduction} \label{sec:intro}

In combinatorial commutative algebra one of the goals is to find descriptions of algebraic properties of monomial ideals through the characteristics of associated combinatorial structures such as the Stanely-Reisner complex or the facet complex.  In the case that the facet complex is 1-dimensional we can look at the monomial ideal as the edge ideal of a graph.  In this case significant relationships have been found between the combinatorial structure of the graph and the algebraic characteristics of the ideal.  For example, it is shown in \cite{Fr90} that the edge ideal of a graph has a linear resolution if and only if the complement of the graph is chordal.  In \cite{HHZ06} it is determined that the edge ideal of a chordal graph is Cohen-Macaulay if and only if it is unmixed.  In higher dimensions these types of results are often not complete classifications.  Rather, there are many interesting theorems showing that certain classes of simplicial complexes or hypergraphs have associated ideals with certain algebraic properties.  For several examples see \cite{Em10}, \cite{Far04}, \cite{HaVT08}, \cite{MYZ12}, and \cite{Wood11}.

Often, however, there exist complete characterizations of algebraic features of square-free monomial ideals that are generated in any degree in terms of the simplicial homology of their associated simplicial complexes.  For example, a theorem of Reisner from \cite{Reis76} gives a condition for the Stanley-Reisner ring of a simplicial complex to be Cohen-Macaulay in terms of the simplicial homology of the complex and some of its subcomplexes.  In \cite{Fr85} Fr\"oberg classifies the monomial ideals with linear resolutions through the simplicial homology of their Stanley-Reisner complexes and their induced subcomplexes.  Nevertheless a simplicial complex may be thought of as a purely combinatorial object and so the question arises as to whether or not one may attribute the existence of non-zero homology in a simplicial complex to a combinatorial structure present in that complex.  This is an interesting mathematical question in its own right, but at the same time its possible solution has the potential to enable easier translation between algebraic properties of monomial ideals and the combinatorial framework.

Despite the substantial use of simplicial homology in classifying algebraic properties and the concrete combinatorial nature of the theory there appears to be very little literature on the explicit combinatorial structures necessary for a simplicial complex to exhibit non-zero simplicial homology.  However, studies have been made into the combinatorics of acyclic simplicial complexes -- those for which simplicial homology vanishes in all dimensions.  In \cite{Kal84} Kalai gave a characterization of the $f$-vectors of such simplicial complexes.  This was followed by work of Stanley in \cite{Stan93} on a combinatorial decomposition of these complexes.  On the other hand, the literature relating to the combinatorial structures associated with non-zero simplicial homology is scant.  In \cite{Fog88} Fogelsanger studied the rigidity of the $1$-skeletons of the support complexes of minimal $d$-cycles and concluded that these $1$-skeletons are rigid graphs when embedded in $\mathbb{R}^d$.   However, the question remains as to whether or not one may describe non-zero homology in a simplicial complex using purely combinatorial properties.

In this paper we introduce the notion of a {\bf $d$-dimensional cycle}.  This is a homological generalization to higher dimensions of the idea of a graph cycle.  We examine the structure of these higher-dimensional cycles and prove results about their combinatorial properties.  We will then show that, over fields of characteristic $2$, non-zero $d$-dimensional homology can be attributed to the presence of $d$-dimensional cycles.  We also show that {\bf orientable} $d$-dimensional cycles lead to non-zero homology over fields of any characteristic.

In \cite{ConFar12} and \cite{ConFar13} the results of this paper are used to study the combinatorial structure of simplicial complexes whose Stanley-Reisner ideals have linear resolutions.

\begin{ack}
The author is grateful to Manoj Kummini for his comments and would also like to thank her supervisor, Sara Faridi, for help in the preparation of this paper.
\end{ack}

\section{Preliminaries} \label{sec:prelim}
\subsection{Simplicial complexes and simplicial homology} \label{sec:simp_cmpx_hom}

An (abstract) {\bf simplicial complex} $\Delta$ is a family of subsets of a finite vertex set $V(\Delta)$ that is closed under taking subsets.  In other words, if $S \in \Delta$ and $S' \subseteq S$ then $S' \in \Delta$. These subsets are called {\bf faces} of $\Delta$ and the elements of $V(\Delta)$ are the {\bf vertices} of $\Delta$.  Those faces which are maximal with respect to inclusion are referred to as {\bf facets} of $\Delta$ and if the facets of $\Delta$ are $F_1,\ldots,F_k$ then we write
\[
    \Delta = \langle F_1,\ldots,F_k \rangle.
\]

The {\bf dimension} of a face $S$ of $\Delta$ is defined to be $|S|-1$ and it is denoted $\dim S$. A face of dimension $d$ is referred to as a {\bf $d$-face}. The {\bf dimension} of the simplicial complex $\Delta$, $\dim \Delta$, is the maximum dimension of any of its faces.  We say that $\Delta$ is {\bf pure} when all of its facets have the same dimension.  If $\Delta$ contains all possible faces of dimension $d$ then $\Delta$ is {\bf $d$-complete.} A {\bf $d$-simplex} is a simplicial complex with a single facet of dimension $d$.  A {\bf subcomplex} of the simplicial complex $\Delta$ is a simplicial complex whose faces are a subset of the faces of $\Delta$.  \Comment{For any subset $W$ of $V(\Delta)$, the {\bf induced subcomplex of $\Delta$ on $W$}, denoted $\Delta_W$, is the subcomplex of $\Delta$ whose vertex set is $W$ and whose faces are exactly those faces of $\Delta$ entirely contained in $W$.}

\Comment{Let $\Delta$ and $\Gamma$ be simplicial complexes with $V(\Delta) \cap V(\Gamma) = \emptyset$.  The {\bf join} of $\Delta$ and $\Gamma$ is the simplicial complex
\[
    \Delta \ast \Gamma = \{F \cup G \ | \ F \in \Delta, G \in \Gamma \}.
\]
It is not difficult to see that $\dim (\Delta \ast \Gamma) = \dim \Delta + \dim \Gamma + 1$.

\idiot{add picture of join}}

An ordering of the vertices in a face $S$ of a simplicial complex is called an {\bf orientation} of $S$ and two orientations are {\bf equivalent} when one is an even permutation of the other.  A $d$-face with a choice of one of these orientations is referred to as an {\bf oriented $d$-face}.  We denote by $[v_0,\ldots,v_d]$ the equivalence class of the $d$-face having vertices $v_0,\ldots,v_d$ and with the orientation $v_0 < \cdots < v_d$.

Letting $A$ be any commutative ring with identity we define $C_d(\Delta)$ to be the free $A$-module whose basis is the oriented $d$-faces of $\Delta$ with the relations $[v_0,v_1,\ldots,v_d]=-[v_1,v_0,\ldots,v_d]$.  We call the elements of $C_d(\Delta)$ {\bf $d$-chains} and we call the simplicial complex whose facets are the $d$-faces in a $d$-chain having non-zero coefficients the {\bf support complex} of the $d$-chain.

The natural boundary map homomorphism $\partial_d: C_d(\Delta) \rightarrow C_{d-1}(\Delta)$ is defined by
\[
    \partial_d([v_0,\ldots,v_d]) = \sum_{i=0}^{d}(-1)^i [v_0,\ldots,v_{i-1},v_{i+1},\ldots,v_d]
\]
for each oriented $d$-face $[v_0,\ldots,v_d]$.  We call the elements of the kernel of $\partial_d$ the {\bf $d$-cycles} and the elements of the image of $\partial_d$ the {\bf $(d-1)$-boundaries}.  We define the {\bf $d$th simplicial homology group} of $\Delta$ over $A$, $H_d(\Delta;A)$, to be the quotient of the group of $d$-cycles over the group of $d$-boundaries.

We can further define a surjective homomorphism
$\epsilon: C_0(\Delta) \rightarrow A$
by $\epsilon(v)=1$ for all $v \in V(\Delta)$. We can then define the {\bf reduced homology group of $\Delta$ in dimension $0$} as
\[
    \tilde{H}_0(\Delta;A) = \ker \epsilon / \im \partial_1.
\]
Setting $\tilde{H}_i(\Delta;A)=H_i(\Delta;A)$ for $i>0$ we obtain the {\bf $i$th reduced homology group} of $\Delta$.

The $d$th simplicial homology group of a simplicial complex $\Delta$ measures the structure of the ``holes'' of dimension $d$ in $\Delta$.  Our aim in this paper is to give concrete descriptions of these holes.  Specifically, we examine the support complexes of $d$-cycles from a combinatorial point of view.

For a more in-depth explanation of simplicial homology we refer the reader to \cite{Munk84}.

\subsection{Graph Theory}
A {\bf finite simple graph} $G$ consists of a finite set of {\bf vertices} denoted $V(G)$ and a set of {\bf edges} denoted $E(G)$ where the elements of $E(G)$ are unordered pairs from $V(G)$.  In this paper we will use the term {\bf graph} to refer to a finite simple graph.  A $1$-dimensional simplicial complex can be thought of as a graph where the edges are given by the $1$-faces of the simplicial complex.

A vertex $v$ of a graph $G$ is said to be {\bf incident} with an edge $e$ of $G$ if $v$ is a vertex of $e$.  The {\bf degree} of a vertex $v$ is the number of edges with which $v$ is incident.

A {\bf path} in a graph $G$ is a sequence of vertices $v_0,\ldots,v_n$ from $V(G)$ such that $\{v_{i-1},v_i\} \in E(G)$ for $1 \leq i \leq n$.  A {\bf connected} graph is a graph in which every pair of vertices is joined by a path.

A {\bf cycle} in a graph $G$ is an ordered list of distinct vertices $v_0,\ldots,v_n$ from $V(G)$ where $\{v_{i-1},v_i\} \in E(G)$ for $1 \leq i \leq n$ and $\{v_n,v_0\} \in E(G)$.  In Figure \ref{fig:six_cycle} we give an example of a graph cycle on six vertices.

\begin{figure}[h!]
{\centering
    \includegraphics[height=1.4in]{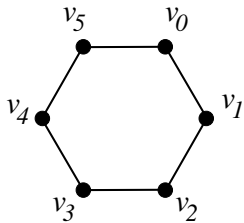}
    \caption {Graph cycle on six vertices.} \label{fig:six_cycle}
}
\end{figure}


\section{Motivation} \label{sec:hom_motivation}

Many of the homological classifications used in commutative algebra are based on the vanishing of simplicial homology in certain simplicial complexes.  We would like to turn this property into an explicitly combinatorial one and so our goal is to relate the concept of non-zero homology in a simplicial complex to the existence of a specific combinatorial structure in that complex which is distinct from the more algebraic notion of a $d$-chain.

It is not difficult to show that this goal is achievable for non-zero simplicial homology in dimension 1.  In this case, the combinatorial structure associated to non-zero homology is the graph cycle.  This can be deduced from \cite[Chapters 4 and 5]{Biggs93}, but in this section we provide an explicit proof of this relationship as motivation for the general case.  The techniques used in this proof also illustrate our overall approach to this problem.

First we recall the following well-known lemma from graph theory.  A proof can be found in \cite[Section 1.2]{West96}.

\begin{lem} \label{lem:contains_cycle}
If $G$ is a finite simple graph in which every vertex has degree at least $2$ then $G$ contains a cycle.
\end{lem}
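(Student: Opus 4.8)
The plan is to run the classical extremal argument on a longest path. First I would restrict attention to paths whose vertices are pairwise distinct (the definition of path in Section~\ref{sec:prelim} allows repetitions, but a cycle does not), and among all such paths choose one, say $P\colon v_0, v_1, \ldots, v_n$, whose number of vertices is as large as possible. Such a $P$ exists because $G$ is finite, and $n \geq 1$ since $G$ has at least one edge (every vertex has degree at least $2$, so in particular $G$ is nonempty and contains an edge).

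Next I would examine the endpoint $v_0$. By hypothesis $v_0$ has degree at least $2$, so besides $v_1$ it has a further neighbour $w$ with $w \neq v_1$; moreover $w \neq v_0$ because $G$ is simple and hence has no loops. If $w$ did not lie on $P$, then $w, v_0, v_1, \ldots, v_n$ would be a path of pairwise distinct vertices with one more vertex than $P$, contradicting the maximality of $P$. Hence $w = v_i$ for some index $i$, and from $w \neq v_0, v_1$ we get $i \geq 2$. Then $v_0, v_1, \ldots, v_i$ are distinct, each consecutive pair is an edge of $G$, and $\{v_i, v_0\} = \{w, v_0\} \in E(G)$, so $v_0, v_1, \ldots, v_i$ is a cycle in $G$ (of length $i+1 \geq 3$).

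There is no genuine obstacle here; the only points requiring care are (i) working with paths of distinct vertices rather than arbitrary walks, so that the maximality step is meaningful and the resulting object really is a cycle, and (ii) verifying that the neighbour $w$ can be taken distinct from both $v_0$ and $v_1$ — this is exactly where the degree-at-least-$2$ hypothesis and the simplicity of $G$ are used. An alternative formulation walks greedily from an arbitrary vertex, always departing along a not-yet-visited vertex until forced to return to an earlier one; this also produces a cycle, but the longest-path version avoids the bookkeeping of tracking visited vertices.
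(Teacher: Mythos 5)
Your argument is correct: the longest-path (extremal) argument is the standard proof of this fact, and your care in distinguishing paths with distinct vertices from arbitrary walks, and in checking that the second neighbour $w$ of $v_0$ is distinct from both $v_0$ and $v_1$, is exactly what is needed for the maximality step and for the conclusion to yield a genuine cycle of length at least $3$ in the sense of Section~\ref{sec:prelim}. The paper itself does not prove this lemma --- it simply cites \cite[Section 1.2]{West96} --- and your proof is essentially the one found there, so there is nothing to compare beyond noting that you have supplied the omitted details. The only pedantic caveat is the empty graph, for which the degree hypothesis holds vacuously but no cycle exists; in the paper's application the graph is the support complex of a nonzero $1$-chain and hence nonempty, so this does not matter.
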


\begin{thm}[{\bf Non-zero $1$-dimensional homology corresponds to graph cycles}] \label{thm:1hom_graph_cycle}
For any simplicial complex $\Delta$ and any field $k$, $\tilde{H}_1(\Delta;k) \neq 0$ if and only if $\Delta$ contains a graph cycle, which is not the support complex of a $1$-boundary.
\end{thm}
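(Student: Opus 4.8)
The plan is to move back and forth between the combinatorial object (a graph cycle, regarded as a subcomplex) and the algebraic object attached to it: if $C$ is the graph cycle on distinct vertices $v_0,\ldots,v_n$, set $z_C = [v_0,v_1] + [v_1,v_2] + \cdots + [v_{n-1},v_n] + [v_n,v_0] \in C_1(\Delta)$. A telescoping computation shows $\partial_1(z_C) = 0$, and since the edges $\{v_{i-1},v_i\}$ and $\{v_n,v_0\}$ are pairwise distinct, $z_C$ has all coefficients equal to $1 \neq 0$ in $k$, so its support complex is exactly $C$. This correspondence will do most of the work.

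For the ``if'' direction I would argue as follows. Suppose $\Delta$ contains a graph cycle $C$ that is not the support complex of a $1$-boundary. Then $z_C$ is a $1$-cycle whose support complex is $C$; if $z_C$ lay in $\im \partial_2$ then $C$ would be the support complex of a $1$-boundary, a contradiction. Hence $[z_C] \neq 0$ in $H_1(\Delta;k) = \tilde{H}_1(\Delta;k)$, so $\tilde{H}_1(\Delta;k) \neq 0$.

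For the ``only if'' direction, assume $\tilde{H}_1(\Delta;k) \neq 0$ and choose a $1$-cycle $z$ representing a nonzero homology class whose support complex $\Gamma$ has the fewest edges among all $1$-cycles not in $\im \partial_2$. First I would check that every vertex of $\Gamma$ has degree at least $2$: writing $z = \sum_e a_e[e]$ with all $a_e \neq 0$, the coefficient of a vertex $v$ in $\partial_1(z)$ is a signed sum of the $a_e$ over edges of $\Gamma$ at $v$, so a degree-$1$ vertex would force some $a_e = 0$. By Lemma \ref{lem:contains_cycle}, $\Gamma$ contains a graph cycle $C$. I then claim $C$ is not the support complex of a $1$-boundary. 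Suppose instead that $b \in \im \partial_2$ has support complex $C$. Since $b \in \ker \partial_1$ is supported on the edges of the cycle graph $C$, and the space of such $1$-cycles is $1$-dimensional over $k$ and spanned by $z_C$, we get $b = c\,z_C$ with $c \in k \setminus\{0\}$. Picking an edge $e_0$ of $C$, it appears in $z$ with a nonzero coefficient and in $b$ with coefficient $\pm c \neq 0$, so for a suitable $\lambda \in k$ the $1$-cycle $z - \lambda b$ does not involve $e_0$; its support complex lies in $\Gamma$ (as $b$ is supported on $C \subseteq \Gamma$) but omits $e_0$, hence has strictly fewer edges. Moreover $z - \lambda b \equiv z \pmod{\im \partial_2}$, so $z - \lambda b$ is still not a boundary, and in particular is nonzero. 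This contradicts the minimality of $\Gamma$, so $C$ is the required graph cycle.

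The main obstacle is this last reduction argument: one has to know that a $1$-boundary whose support complex is a cycle graph must be a scalar multiple of the standard cycle $z_C$ on it, so that subtracting it cancels an edge without introducing any new ones, and one must simultaneously keep track that the resulting chain is nonzero, still a non-boundary, and strictly smaller in support. The rest — the telescoping boundary computation, the degree count, and the identification $\tilde{H}_1 = H_1$ — is routine.
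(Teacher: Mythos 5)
Your proof is correct, and it reorganizes the key step in a way that genuinely differs from the paper's argument. The paper also starts from a homological $1$-cycle $c$ whose support complex $\Omega$ is edge-minimal among non-boundaries, notes the minimum degree is $2$, and extracts a graph cycle via Lemma \ref{lem:contains_cycle}; but it then uses minimality twice more to prove that $\Omega$ is \emph{itself} that graph cycle (subtracting a multiple of the standard chain $z_C$ to cancel one edge and arguing that both the subtracted chain and the remainder would have to be boundaries), and only afterwards shows, via an ``all coefficients equal'' computation, that $\Omega$ cannot be the support complex of a $1$-boundary. You skip both the connectivity argument and the identification of $\Gamma$ with $C$: instead you suppose a boundary $b$ is supported exactly on $C$, use the one-dimensionality of the cycle space of a graph cycle to write $b = c\,z_C$, and subtract a suitable multiple of $b$ from $z$ to kill an edge while staying in the same homology class, contradicting minimality directly. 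This is slightly more economical --- you never need to determine the global structure of $\Gamma$ --- at the cost of invoking, with only a sketched justification, the fact that a $1$-cycle supported on the edges of a graph cycle is a scalar multiple of $z_C$; the paper proves exactly this fact explicitly (the computation showing $\beta_1=\cdots=\beta_n$ from $\partial_1(d)=0$), so you could lift that computation verbatim to close the one point you flagged as the main obstacle. The converse direction is identical in both proofs.
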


\begin{proof}
Suppose that $\tilde{H}_1(\Delta;k) \neq 0$.  Then $\Delta$ contains a $1$-cycle $c$ that is not a $1$-boundary.  We may assume that the support complex $\Omega$ of $c$ is minimal with respect to this property.  In other words no strict subset of the $1$-faces of $c$ is the support complex of a $1$-cycle which is not a $1$-boundary.  First we would like to show that $\Omega$ is a connected graph.

The $1$-cycle $c$ is of the form
\begin{equation}\label{eq:c}
    c=\alpha_1 F_1+\cdots +\alpha_n F_n
\end{equation}
for some oriented $1$-faces $F_1,\ldots,F_n$ of $\Delta$ and where $\alpha_i \in k$.  If $\Omega$ is not connected then we can partition the $1$-faces $F_1,\ldots,F_n$ into two sets having no vertices in common.  Without loss of generality let these two sets be $\{F_1,\ldots,F_\ell\}$ and $\{F_{\ell+1},\ldots,F_n\}$.  Since
\[
    \partial_1(c)=\partial_1(\alpha_1 F_1+\cdots +\alpha_n F_n)=0
\]
and since there are no vertices shared between the two sets we must have
\[
    \partial_1(\alpha_1 F_1+\cdots +\alpha_\ell F_\ell)=0 \; \textrm{ and } \; \partial_1(\alpha_{\ell+1} F_{\ell+1}+\cdots +\alpha_n F_n)=0
\]
and so $\alpha_1 F_1+\cdots +\alpha_\ell F_\ell$ and $\alpha_{\ell+1} F_{\ell+1}+\cdots +\alpha_n F_n$ are both $1$-cycles.  By the assumption of minimality of $\Omega$ we know that these $1$-cycles must also be $1$-boundaries.  However, since $c$ is the sum of these two $1$-chains and they are both $1$-boundaries, $c$ must be a $1$-boundary as well, which is a contradiction.  Therefore $\Omega$ must be a connected graph.

Next, note that the degree of all vertices in $\Omega$ must be at least two.  This follows since $\partial_1(c)=0$ and this may only be achieved if all vertices present cancel out in this sum.  Therefore each vertex must appear at least twice.  Hence by Lemma \ref{lem:contains_cycle} we know that $\Omega$ contains a graph cycle.  Let $v_0,\ldots,v_m$ be the vertices in this cycle where $v_i$ is adjacent to $v_{i+1}$ for $0 \leq i \leq m-1$ and $v_m$ is adjacent to $v_0$.  By relabeling if necessary we may assume that $F_1,\ldots,F_m$ are the oriented $1$-faces corresponding to the edges in this cycle with $F_i = \varepsilon_i [v_i,v_{i+1}]$ for $0 \leq i \leq m-1$ and $F_m = \varepsilon_m [v_m,v_0]$ where $\varepsilon_i = \pm 1$ depending on the orientation of $F_i$.  Suppose that $m < n$.    Then
\[
    \varepsilon_1 F_1+\ldots +\varepsilon_m F_m=\sum_{i=0}^{m-1}[v_i,v_{i+1}] + [v_m,v_0]
\]
is a $1$-chain and it is straightforward to see that
\[
    \partial_1(\varepsilon_1 F_1+\ldots +\varepsilon_m F_m)=0.
\]
Let $b= \alpha_1\varepsilon_1 (\varepsilon_1 F_1+\cdots + \varepsilon_m F_m)$.  Then
\[
    \partial_1(b)=\alpha_1\varepsilon_1 \partial_1(\varepsilon_1 F_1+\ldots +\varepsilon_m F_m) = 0
\]
and so $b$ is a $1$-cycle.  We also have
\[
    \partial_1(c-b)=\partial_1(c)-\partial_1(b) =0-0=0
\]
and so $c-b$ is also a $1$-cycle.  Now $m<n$ and so by our assumption of minimality of $\Omega$ we know that $b$ is a $1$-boundary.  Since
\[
    c-b = (\alpha_2-\alpha_1\varepsilon_1\varepsilon_2)F_2 +(\alpha_3-\alpha_1\varepsilon_1\varepsilon_3)F_3+\cdots (\alpha_m-\alpha_1\varepsilon_1\varepsilon_m)F_m +\alpha_{m+1}F_{m+1}+\cdots+\alpha_nF_n,
\]
it is supported on a strict subset of the $1$-faces of $\Omega$ and so, by the assumption of minimality, $c-b$ is a $1$-boundary also.  Therefore, since both $b$ and $c-b$ are $1$-boundaries and $c=b+(c-b)$ then $c$ is a $1$-boundary.  This is a contradiction and so we must have $m=n$.  Therefore $\Omega$ itself is a graph cycle.

Next we will show that $\Omega$ is not the support complex of a $1$-boundary.  First we show that for any $1$-cycle of the form
\[
    d=\beta_1F_1+\cdots+\beta_nF_n
\]
for non-zero $\beta_1,\ldots,\beta_n \in k$ we have $\beta_1=\cdots=\beta_n$.  Let $v_1^i,v_2^i$ be the two vertices belonging to the $1$-face $F_i$.  Since $d$ is a $1$-cycle we have
\begin{equation}\label{eq:graph_cycle}
    0=\partial_1(d)=\beta_1v_1^1-\beta_1v_2^1+\cdots+\beta_n v_1^n -\beta_n v_2^n
\end{equation}
and, since $\Omega$ is a graph cycle, each $v_s^t$ appears exactly twice in (\ref{eq:graph_cycle}) as it belongs to two $1$-faces.  Therefore if $F_i$ and $F_j$ are any two $1$-faces which share a vertex then we get $\beta_i=\beta_j$ from (\ref{eq:graph_cycle}) and so any two $1$-faces sharing a vertex have the same coefficient.  Since $\Omega$ is connected we have $\beta_1=\cdots=\beta_n$.

Therefore $\alpha_1=\cdots=\alpha_n$ in (\ref{eq:c}) and if there exists a $1$-cycle in $\Delta$ with $\Omega$ as a support complex which is also a $1$-boundary then this implies that $c$ is also a $1$-boundary after multiplication by a constant in $k$.  This is a contradiction and so $\Omega$ is a graph cycle which is not the support complex of a $1$-boundary.

Conversely, suppose that $\Delta$ contains a graph cycle $\Omega$, which is not the support complex of a $1$-boundary.  Let $v_0,\ldots,v_k$ be the vertices of $\Omega$ where $v_i$ is adjacent to $v_{i+1}$ for $0 \leq i \leq k-1$ and $v_k$ is adjacent to $v_0$.  Then
\[
    c=\sum_{i=0}^{k-1}[v_i,v_{i+1}] + [v_k,v_0]
\]
is a $1$-chain whose support complex is $\Omega$ and it is easy to see that $\partial_1(c)=0$.  Therefore $c$ is a $1$-cycle, and by assumption it is not a $1$-boundary.  Therefore $\tilde{H}_1(\Delta;k) \neq 0$.

\end{proof}


\section{$d$-Dimensional Cycles} \label{sec:ddimcycles}
The role of the graph cycle in graph theory is substantial.  The existence, frequency, and lengths of cycles in a graph play important roles in many different areas of graph theory such as connectivity, perfect graphs, graph colouring, networks, and extremal graph theory.  Higher-dimensional versions of the graph cycle such as the Berge cycle (Berge \cite{Berge89}) and the simplicial cycle (Caboara et al. \cite{CFS07}) have been introduced to extend the usefulness of this concept to hypergraphs or simplicial complexes.

In recent years, with the introduction of edge ideals of graphs by Villarreal in \cite{Vill90}, the influence of the graph cycle has extended into combinatorial commutative algebra.  As we saw in Section \ref{sec:hom_motivation}, the graph cycle, when thought of as a simplicial complex, is exactly the right structure to describe non-zero simplicial homology in dimension one.  We would like to extend the idea of a graph cycle to higher dimensions in order to capture the idea of non-zero higher-dimensional simplicial homology.

When we examine the support complexes of ``minimal'' homological $d$-cycles we see that such complexes must be connected in a particularly strong way.  We will use the following definitions.

\begin{definition} [{\bf $d$-path, $d$-path-connected, $d$-path-connected components}]
A sequence $F_1,\ldots,F_k$ of $d$-dimensional faces in a simplicial complex $\Delta$ is a {\bf $d$-path} between $F_1$ and $F_k$ when $|F_i \cap F_{i+1}| = d$ for all $1 \leq i \leq k-1$.  If $\Delta$ is a pure $d$-dimensional simplicial complex and there exists a $d$-path between each pair of its $d$-faces then $\Delta$ is {\bf $d$-path-connected}.  The maximal subcomplexes of $\Delta$ which are $d$-path-connected are called the {\bf $d$-path-connected components} of $\Delta$.
\end{definition}

Note that a $d$-path-connected simplicial complex is sometimes referred to as {\bf strongly connected} (see, for example, \cite{Bjorn96}). In Figure \ref{fig:dpath} we give an example of a $2$-path between the $2$-faces $F_1$ and $F_2$.  Figure \ref{fig:dpathcomponents} shows a pure $2$-dimensional simplicial complex with two $2$-path-connected components shown by different levels of shading.

\Comment{\begin{figure}[h!]
{\centering
    \includegraphics[height=1in]{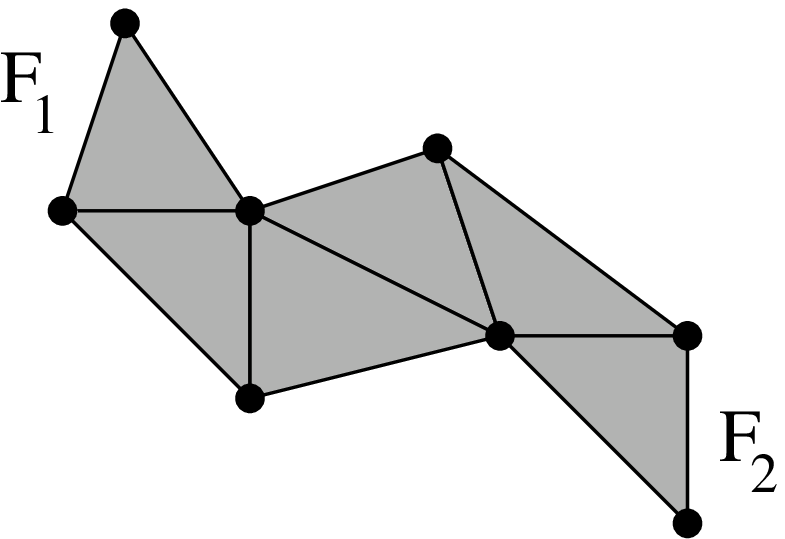}
    \caption {A $2$-path between $F_1$ and $F_2$.} \label{fig:dpath}

}
\end{figure}}

\begin{figure}[h]
\centering
\subfloat[A $2$-path between $F_1$ and $F_2$]{\makebox[7cm]{
            \includegraphics[height=1.2in]{dpath}}
            \label{fig:dpath}} \qquad
\subfloat[$2$-path-connected components]{\makebox[7cm]{
	\includegraphics[height=.9in]{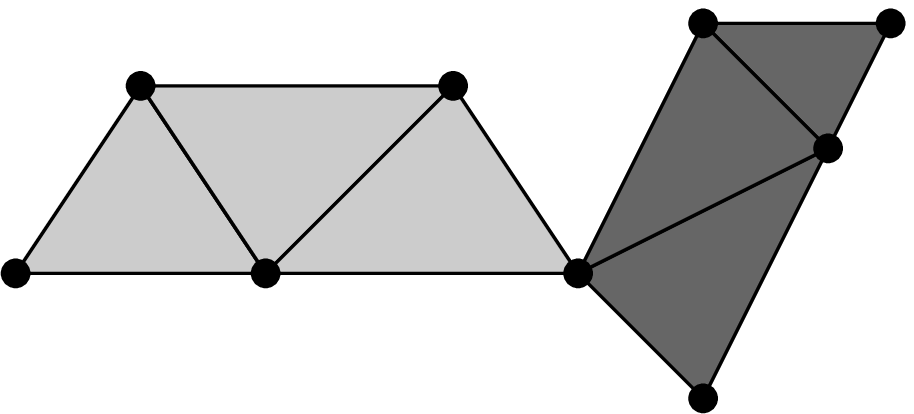}}
	\label{fig:dpathcomponents}} \\
\caption{Examples for path-connected complexes.} \label{fig:2d_cycles}
\end{figure}

A graph cycle is characterized by two features.  It is connected and each of its vertices is of degree two.  By generalizing these two properties we arrive at our combinatorial definition of a higher-dimensional cycle.

\begin{definition} [{\bf $d$-dimensional cycle}]
If $\Omega$ is a pure $d$-dimensional simplicial complex such that
\begin{enumerate}
\item $\Omega$ is $d$-path-connected, and
\item every $(d-1)$-face of $\Omega$ belongs to an even number of $d$-faces of $\Omega$
\end{enumerate}
then $\Omega$ is a {\bf $d$-dimensional cycle}.
\end{definition}

Notice that, by definition, a $d$-dimensional cycle has only one $d$-path-connected component.  We will see in Proposition \ref{prop:smallest_cycle} that a $d$-dimensional cycle must contain at least $d+2$ facets.

Several examples of $2$-dimensional cycles are given in Figure \ref{fig:2d_cycles}.

\begin{figure}[h]
\centering
\subfloat[A hollow octahedron.]{\makebox[7cm]{
            \includegraphics[height=1.4in]{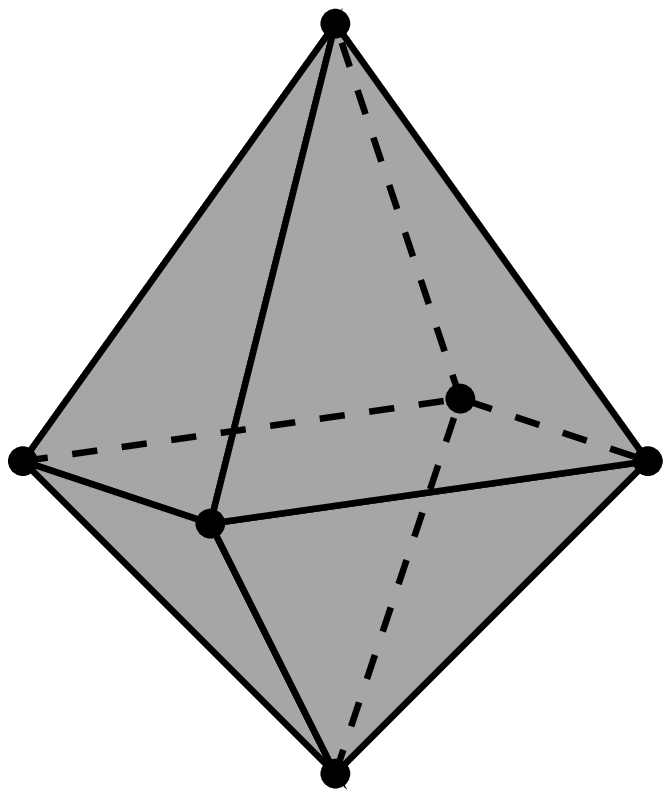}}
            \label{fig:octacycle}} \qquad
\subfloat[A triangulation of the sphere.]{\makebox[7cm]{
	\includegraphics[height=1.2in]{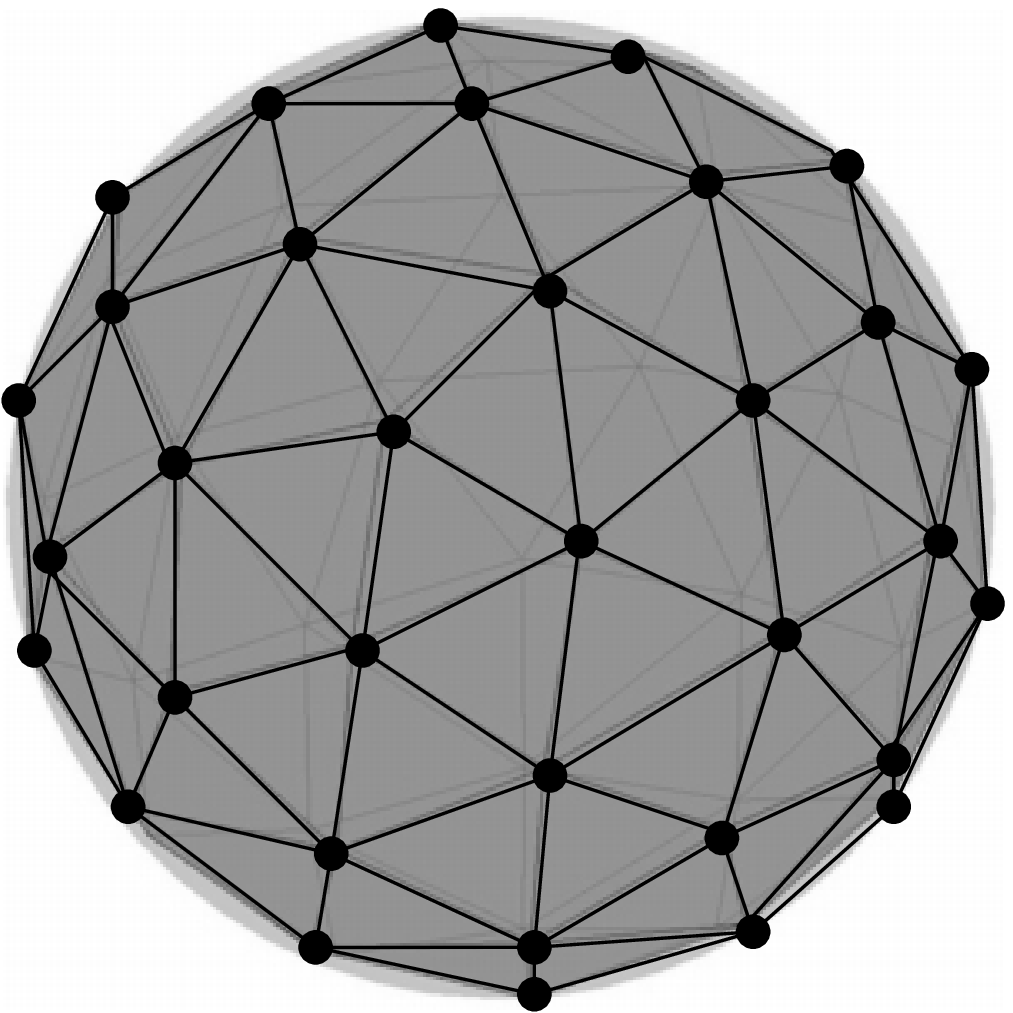}}
	\label{fig:sphere}} \\
\subfloat[A triangulation of the torus.]{\makebox[7cm]{
	\includegraphics[height=1.3in]{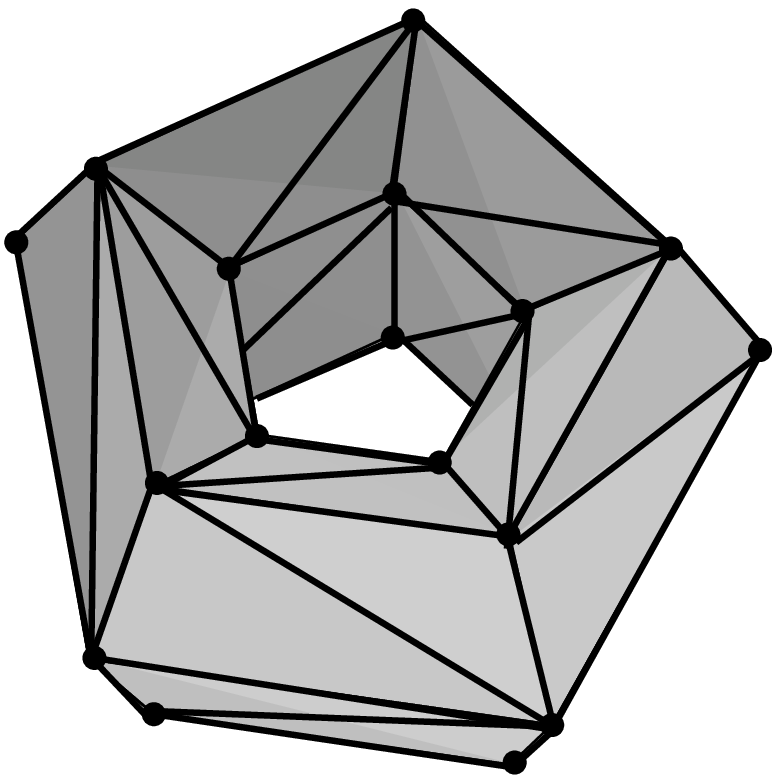}}
	\label{fig:torus}} \qquad
\subfloat[A triangulation of the real projective plane.]{\makebox[7cm]{
	\includegraphics[height=1.2in]{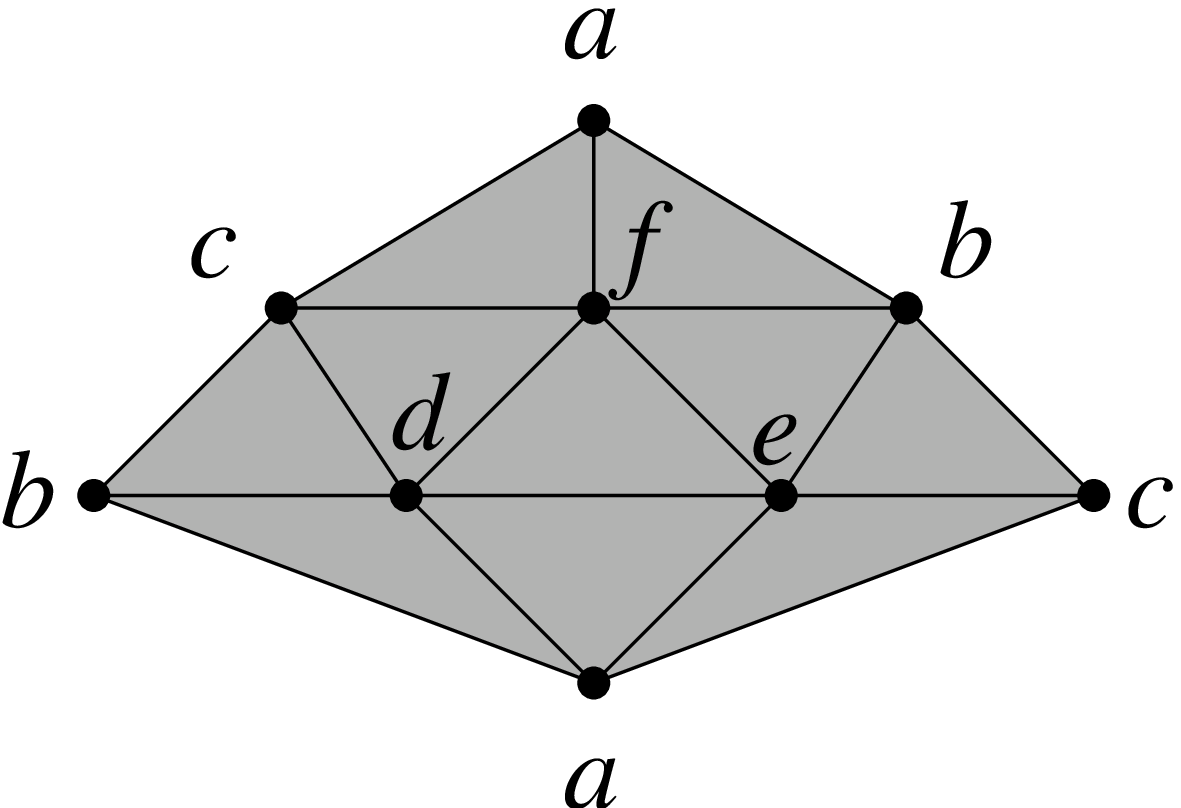}}
	\label{fig:real_proj_plane}} \\
\subfloat[Two triangulated square pyramids glued along a $1$-face.]{\makebox[7cm]{
	\includegraphics[height=1.1in]{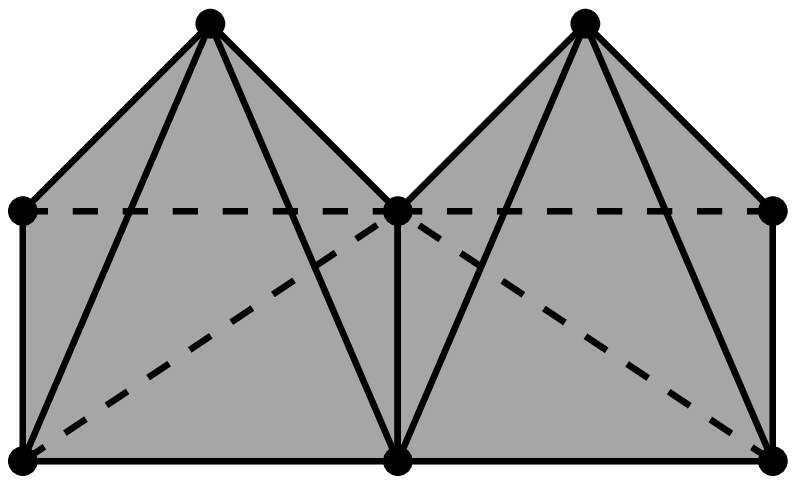}}
	\label{fig:two_pyramid}}\qquad
\subfloat[A triangulation of the sphere pinched along a $1$-face.]{\makebox[7cm]{
	\includegraphics[height=1.3in]{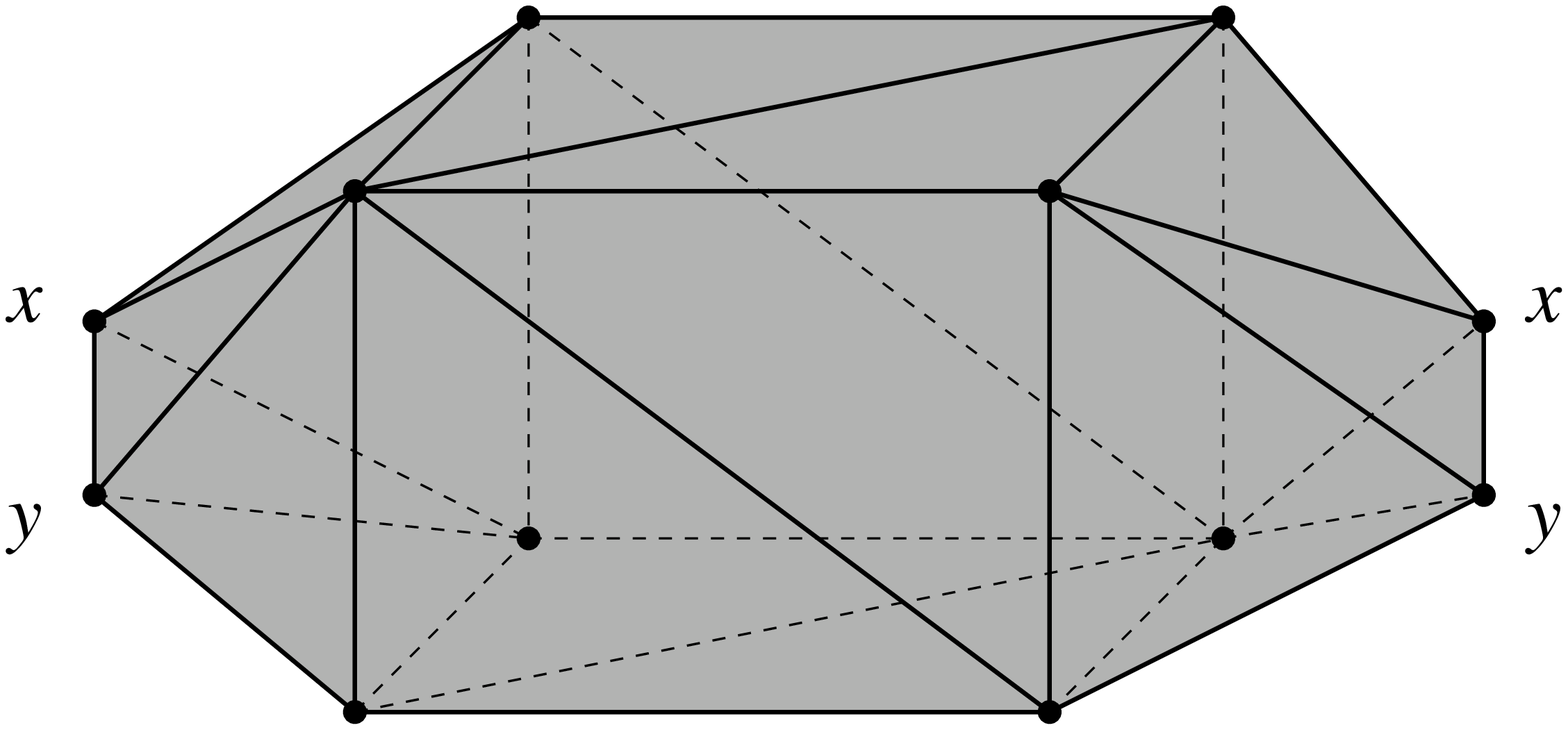}}
	\label{fig:pinched_sphere}}
\caption{Examples of $2$-dimensional cycles.} \label{fig:2d_cycles}
\end{figure}

One of the properties of a $d$-dimensional cycle is that it contains $(d-1)$-dimensional cycles as subcomplexes.  As an example, in Figure \ref{fig:cycle_within_cycle} we have a $2$-dimensional cycle containing a $1$-dimensional cycle within the $2$-faces containing the vertex $v$.  The $1$-dimensional cycle is shown with dotted lines.

\begin{figure}[h!]
{\centering
    \includegraphics[height=1.4in]{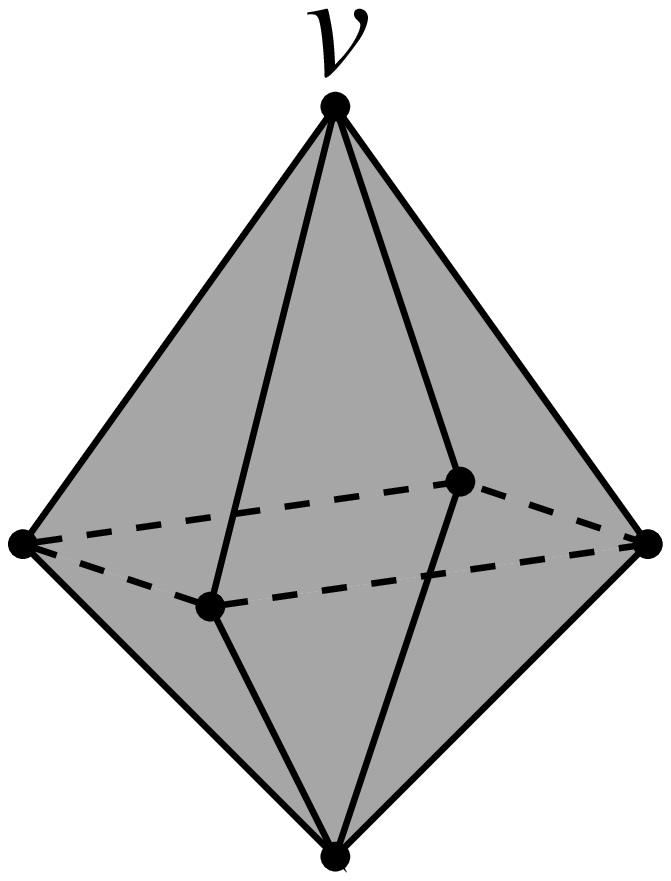}
    \caption {$2$-dimensional cycle containing a $1$-dimensional cycle.} \label{fig:cycle_within_cycle}

}
\end{figure}

\begin{prop}[{\bf $d$-Dimensional cycles contain $(d-1)$-dimensional cycles}] \label{prop:dcycle_contains_smallercycle}
Let $v$ be a vertex of a $d$-dimensional cycle $\Omega$ and let $F_1,\ldots,F_k$ are the $d$-faces of $\Omega$ containing $v$. The $(d-1)$-path-connected components of the simplicial complex
\[
    \langle F_1\setminus \{v\}, \ldots, F_k\setminus \{v\}\rangle
\]
are $(d-1)$-dimensional cycles.
\end{prop}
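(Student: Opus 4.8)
The plan is to verify, one at a time, the two conditions in the definition of a $(d-1)$-dimensional cycle for an arbitrary $(d-1)$-path-connected component of $\Gamma:=\langle F_1\setminus\{v\},\ldots,F_k\setminus\{v\}\rangle$. I would begin with a few elementary remarks about $\Gamma$. Each $F_i\setminus\{v\}$ has exactly $d$ elements, hence is a $(d-1)$-face; moreover if $F_i\setminus\{v\}\subseteq F_j\setminus\{v\}$ then adjoining $v$ to both sides gives $F_i\subseteq F_j$, so $F_i=F_j$. Thus the facets of $\Gamma$ are precisely the distinct sets among $F_1\setminus\{v\},\ldots,F_k\setminus\{v\}$, the complex $\Gamma$ is pure of dimension $d-1$, and the assignment $F\mapsto F\setminus\{v\}$ is injective on $\{F_1,\ldots,F_k\}$. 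In particular, any $(d-1)$-path-connected component $\Sigma$ of $\Gamma$ is, by definition, a pure $(d-1)$-dimensional, $(d-1)$-path-connected complex, so condition (1) for $\Sigma$ to be a $(d-1)$-dimensional cycle holds automatically. It remains to verify condition (2): every $(d-2)$-face of $\Sigma$ is contained in an even number of $(d-1)$-faces of $\Sigma$.

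So I would fix a component $\Sigma$ and a $(d-2)$-face $G$ of $\Sigma$. Since $G$ is contained in some facet of $\Gamma$ we have $v\notin G$, hence $G\cup\{v\}$ has $d$ elements and is a face of $\Omega$, that is, a $(d-1)$-face of $\Omega$. The crucial step is the observation that $F\mapsto F\setminus\{v\}$ restricts to a bijection between the $d$-faces of $\Omega$ containing $G\cup\{v\}$ and the facets of $\Gamma$ containing $G$: a $d$-face $F$ of $\Omega$ with $G\cup\{v\}\subseteq F$ contains $v$, so $F$ is one of $F_1,\ldots,F_k$ and $G\subseteq F\setminus\{v\}$; conversely if a facet $F_i\setminus\{v\}$ contains $G$ then $F_i\supseteq G\cup\{v\}$; and injectivity was noted above. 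Because $\Omega$ is a $d$-dimensional cycle, the $(d-1)$-face $G\cup\{v\}$ lies in an even number of $d$-faces of $\Omega$, and therefore $G$ lies in an even number of facets of $\Gamma$.

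To conclude, I would check that this even count is already achieved within $\Sigma$. As $\Sigma$ is pure of dimension $d-1$, the face $G$ is contained in some facet of $\Sigma$, say $F_p\setminus\{v\}$. If $F_q\setminus\{v\}$ is any other facet of $\Gamma$ containing $G$, then $(F_p\setminus\{v\})\cap(F_q\setminus\{v\})$ contains $G$ and so has at least $d-1$ elements; it has at most $d-1$ elements, since two distinct $(d-1)$-faces cannot intersect in all $d$ of the vertices of $F_p\setminus\{v\}$; hence the intersection has exactly $d-1$ elements, which means $F_p\setminus\{v\}$ and $F_q\setminus\{v\}$ form a $(d-1)$-path of length one and therefore lie in the same $(d-1)$-path-connected component of $\Gamma$, namely $\Sigma$. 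Consequently the facets of $\Gamma$ containing $G$ are exactly the $(d-1)$-faces of $\Sigma$ containing $G$, and this number is even, which is condition (2). This finishes the proof. The argument is short, and there is no serious obstacle; the only point requiring a little care is this last one, ensuring that the parity is counted inside the correct component — and that care dissolves once one notices that any two facets sharing a common $(d-2)$-face are automatically $(d-1)$-path adjacent.
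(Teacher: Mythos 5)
Your proposal is correct and follows essentially the same route as the paper's proof: identify the $(d-2)$-face $G$ of the component with the $(d-1)$-face $G\cup\{v\}$ of $\Omega$, invoke the even-incidence property of $\Omega$, and observe that all facets of $\langle F_1\setminus\{v\},\ldots,F_k\setminus\{v\}\rangle$ containing $G$ are pairwise $(d-1)$-path adjacent and hence lie in the same component. Your write-up merely makes a few of the paper's implicit checks (injectivity of $F\mapsto F\setminus\{v\}$, the exact size of the pairwise intersections) explicit.
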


\begin{proof}
Let $\Omega_v = \langle F_1\setminus \{v\}, \ldots, F_k\setminus \{v\}\rangle$ and let $\Omega_v'$ be a $(d-1)$-path-connected component of $\Omega_v$ with $(d-1)$-faces $F_{i_1}\setminus \{v\}, \ldots, F_{i_\ell}\setminus \{v\}$.  To show that $\Omega_v'$ is a $(d-1)$-dimensional cycle we need only show that each of its $(d-2)$-faces is contained in an even number of the faces $F_{i_1}\setminus \{v\}, \ldots, F_{i_\ell}\setminus \{v\}$.  Let $f$ be a $(d-2)$-face of $\Omega_v'$.  Note that, for $1 \leq r \leq \ell$, $f$ is a $(d-2)$-face of $F_{i_r}\setminus \{v\}$ if and only if $f \cup \{v\}$ is a $(d-1)$-face of $F_{i_r}$.  The face $f \cup \{v\}$ belongs to an even number of the $d$-faces in $\Omega$ since $\Omega$ is a $d$-dimensional cycle.  Since these faces all contain $v$ they belong to the set $\{F_1,\ldots,F_k\}$ and thus $f \cup \{v\}$ belongs to an even number of the $d$-faces $F_1,\ldots,F_k$.  Note that, after removing $v$ from these $d$-faces, they are $(d-1)$-path-connected in $\Omega_v$ since they all contain $f$.  Hence, with $v$ removed, these faces all lie in $\Omega_v'$ and so $f$ is contained in an even number of the faces $F_{i_1}\setminus \{v\}, \ldots, F_{i_\ell}\setminus \{v\}$.  Therefore $\Omega_v'$ is a $(d-1)$-dimensional cycle.
\end{proof}

In contrast to Proposition \ref{prop:dcycle_contains_smallercycle} we can also create higher-dimensional cycles from lower-dimensional ones under certain homological conditions over $\Z_2$ by joining an additional vertex to the facets of the cycle.  For an illustration of this construction see Figure \ref{fig:cone_construction}.

\begin{figure}[h]
\centering
\subfloat[The $1$-dimensional cycle $\Omega$]{\makebox[5cm]{
            \includegraphics[height=1.2in]{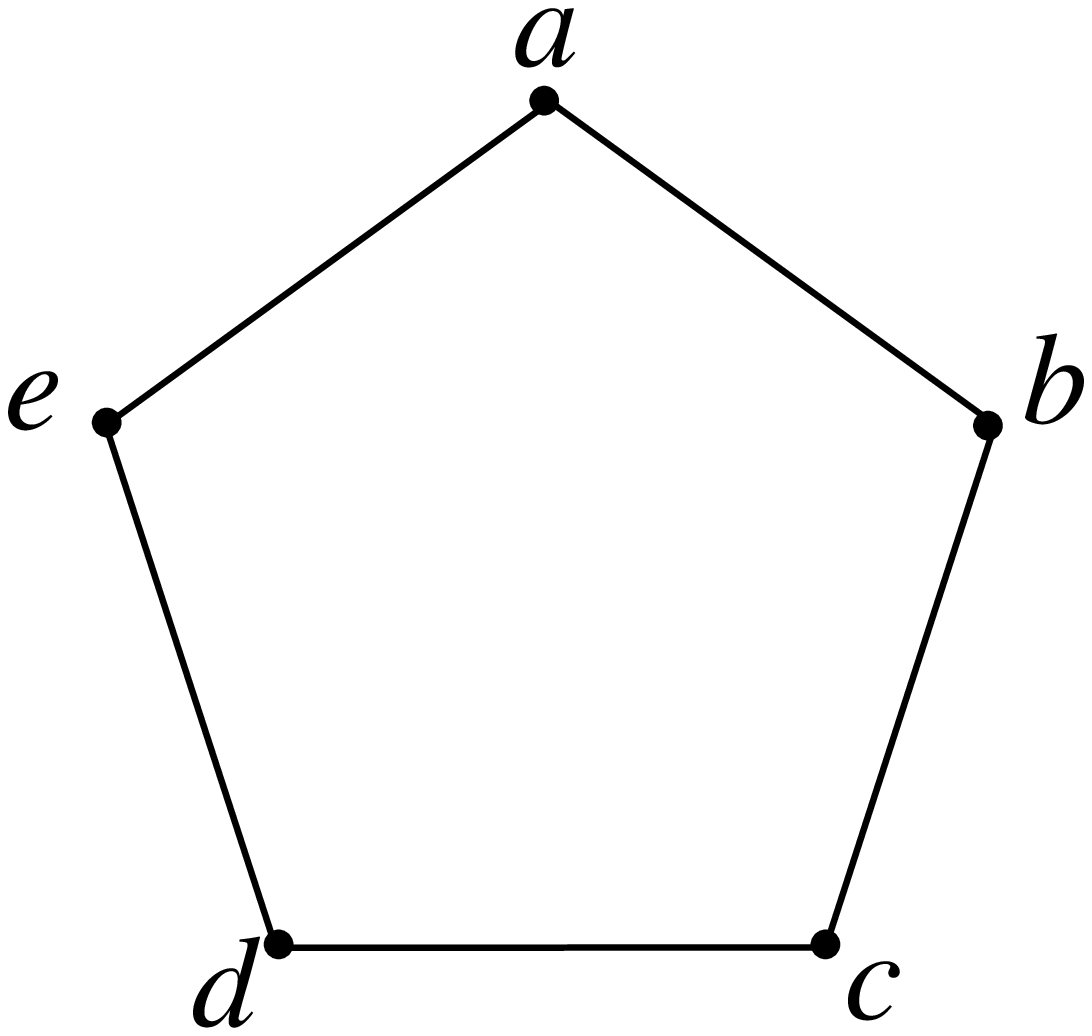}}
            \label{fig:cone_construction1}} \qquad
\subfloat[$\Omega$ as the support complex of a $1$-boundary and the vertex $v$]{\makebox[5cm]{
	\includegraphics[height=1.2in]{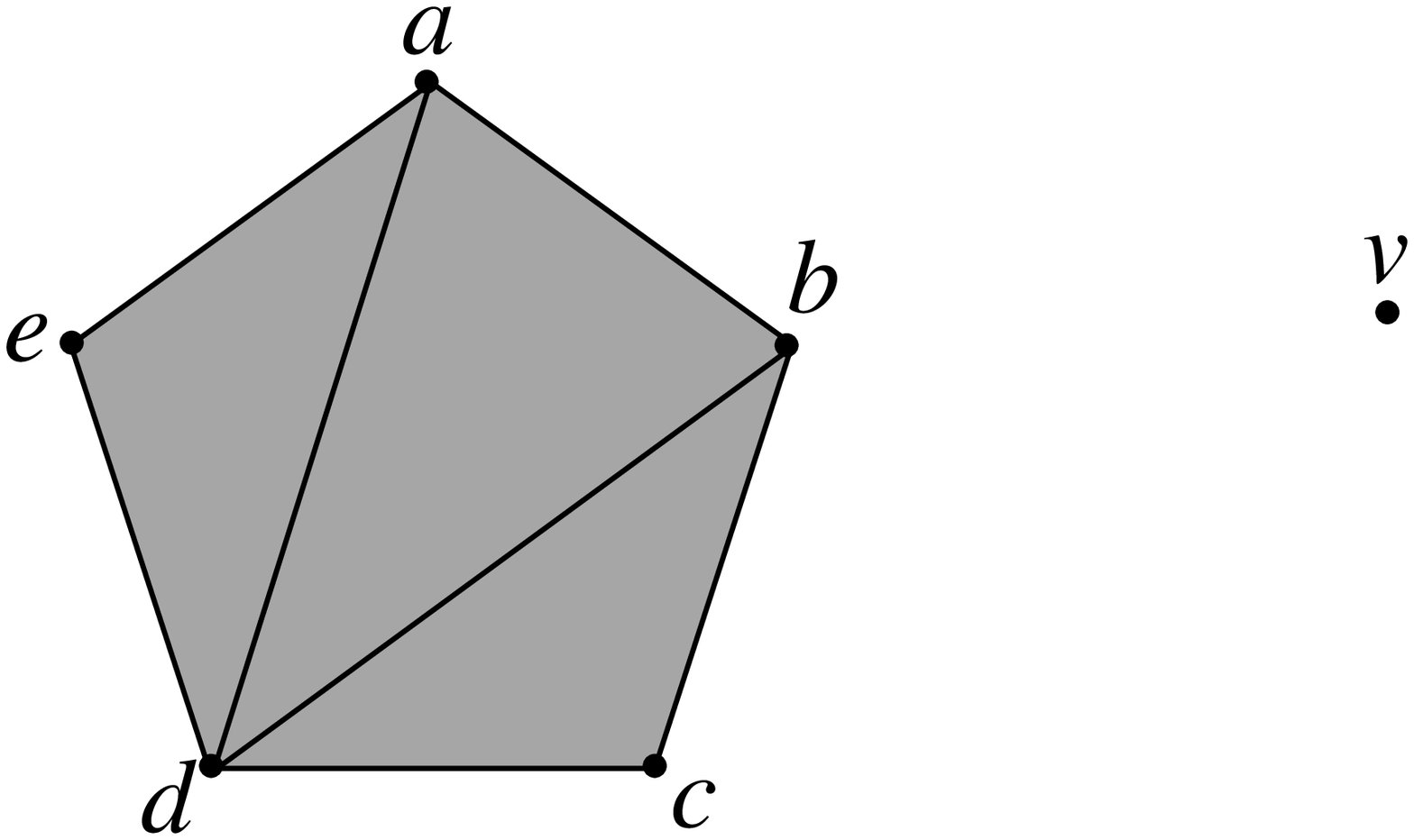}}
	\label{fig:cone_construction2}} \\
\subfloat[The $2$-dimensional cycle $\Phi$]{\makebox[5cm]{
	\includegraphics[height=1.2in]{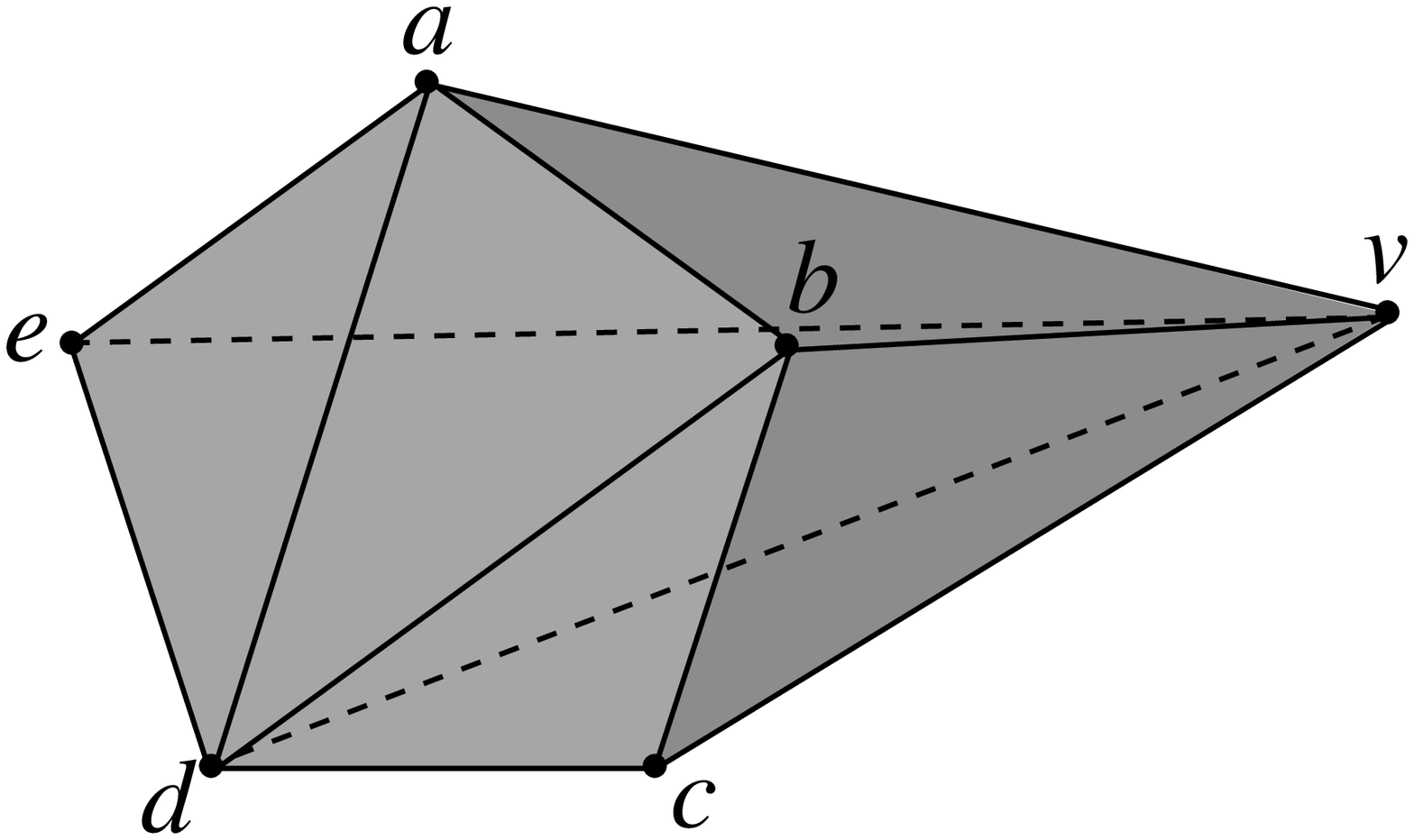}}
	\label{fig:cone_construction3}}
\caption{Example of construction in Proposition \ref{prop:cone_cycle}.} \label{fig:cone_construction}
\end{figure}

\begin{prop} [{\bf $d$-Dimensional cycles extend to $(d+1)$-dimensional cycles in some cases}] \label{prop:cone_cycle}
Let $\Delta$ be a simplicial complex containing the $d$-dimensional cycle $\Omega$ whose $d$-faces are $F_1,\ldots,F_k$. If there exist $(d+1)$-faces $A_1,\ldots,A_\ell$ in $\Delta_{V(\Omega)}$ such that, over $\Z_2$ we have
\begin{equation}\label{eq:boundaryofcycle}
    \partial_{d+1}\left(\sum_{i=1}^\ell A_i\right)=\sum_{j=1}^k F_j
\end{equation}
and no strict subset of $\{A_1,\ldots,A_\ell\}$ also satisfies (\ref{eq:boundaryofcycle}) then
\[
    \Phi = \langle F_1 \cup v,\ldots,F_k\cup v, A_1,\ldots,A_\ell \rangle
\]
is a $(d+1)$-dimensional cycle for any vertex $v \notin V(\Omega)$.
\end{prop}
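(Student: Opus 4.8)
The plan is to verify directly the three defining requirements of a $(d+1)$-dimensional cycle for $\Phi$: that it is pure of dimension $d+1$, that every $d$-face of $\Phi$ lies in an even number of $(d+1)$-faces, and that $\Phi$ is $(d+1)$-path-connected. For purity, since $v\notin V(\Omega)$ each $F_j\cup v$ has $d+2$ vertices, each $A_i\subseteq V(\Omega)$ has $d+2$ vertices, and $v$ lies in every $F_j\cup v$ but in no $A_i$; hence no one of these faces is contained in another (the $F_j\cup v$ are distinct because the $F_j$ are, and the $A_i$ are pairwise distinct, since a repetition would cancel over $\Z_2$ and contradict minimality). So $F_1\cup v,\dots,F_k\cup v,A_1,\dots,A_\ell$ are exactly the facets of $\Phi$ and $\Phi$ is pure of dimension $d+1$.

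For the even-face condition I would split the $d$-faces of $\Phi$ according to whether they contain $v$. A $d$-face containing $v$ has the form $g\cup v$ with $g$ a $(d-1)$-face of some $F_j$, hence a $(d-1)$-face of $\Omega$; a $(d+1)$-face of $\Phi$ containing it must contain $v$, so it is one of the $F_m\cup v$ with $g\subseteq F_m$, and the number of these equals the number of $d$-faces of $\Omega$ containing $g$, which is even because $\Omega$ is a $d$-dimensional cycle. A $d$-face $h$ of $\Phi$ not containing $v$ lies in $V(\Omega)$; the $(d+1)$-faces of $\Phi$ containing it are the $A_i$ with $h\subseteq A_i$ together with the single face $F_m\cup v$ in case $h=F_m$. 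Comparing coefficients of $h$ on the two sides of $\partial_{d+1}(\sum_i A_i)=\sum_j F_j$ over $\Z_2$ shows that the number of $A_i$ having $h$ as a $d$-face is congruent mod $2$ to $1$ if $h\in\{F_1,\dots,F_k\}$ and to $0$ otherwise, so in either case the total number of $(d+1)$-faces of $\Phi$ containing $h$ is even.

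The crux is $(d+1)$-path-connectedness. First, $F_1\cup v,\dots,F_k\cup v$ all lie in one $(d+1)$-path-connected component, because a $d$-path $G_0,\dots,G_m$ in $\Omega$ joining two of the $F_j$ lifts to the $(d+1)$-path $G_0\cup v,\dots,G_m\cup v$ in $\Phi$ (consecutive intersections gain exactly the vertex $v$). Suppose $\Phi$ had a second $(d+1)$-path-connected component $C$; since all $F_j\cup v$ lie in the first component, the facets of $C$ form a nonempty sub-collection of the $A_i$, say those with $i\in S$, and no $A_i$ with $i\in S$ shares a $d$-face with any facet of $\Phi$ outside $C$ — indeed two distinct $(d+1)$-faces sharing a $d$-face meet in exactly $d+1$ vertices and so form a $(d+1)$-path. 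In particular no $A_i$ with $i\in S$ has any $F_j$ as a $d$-face, so every $d$-face occurring in $\sum_{i\in S}A_i$ differs from all the $F_j$ and is not a $d$-face of any $A_{i'}$ with $i'\notin S$; reading off coefficients in $\partial_{d+1}(\sum_{i=1}^{\ell}A_i)=\sum_j F_j$ over $\Z_2$ then forces $\partial_{d+1}(\sum_{i\in S}A_i)=0$, hence $\partial_{d+1}(\sum_{i\notin S}A_i)=\sum_j F_j$ with $\{A_i:i\notin S\}$ a strict sub-collection, contradicting minimality of $\{A_1,\dots,A_\ell\}$ (in the extreme case $S=\{1,\dots,\ell\}$ this instead reads $0=\sum_j F_j$, impossible since $\Omega$ has $k\ge d+2\ge2$ distinct facets). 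Therefore $\Phi$ has a single $(d+1)$-path-connected component, and together with the first two steps this shows $\Phi$ is a $(d+1)$-dimensional cycle.

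I expect the main obstacle to be exactly this last step: being careful that a spurious component of $A_i$'s is genuinely isolated (no shared $d$-face, not merely no shared facet) and that its boundary contribution cannot be cancelled by the other $A_{i'}$ or by the $F_j$, so that removing it produces a strict sub-collection still satisfying $(\ref{eq:boundaryofcycle})$. The purity check and the even-face check are routine once the coefficients over $\Z_2$ are tracked.
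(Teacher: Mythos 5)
Your proposal is correct and follows essentially the same route as the paper's proof: a case analysis on the $d$-faces of $\Phi$ according to whether they contain $v$ and whether they equal some $F_j$ (using the mod-$2$ coefficient of $F_j$ versus a non-face of $\Omega$ in (\ref{eq:boundaryofcycle})), followed by the observation that a spurious $(d+1)$-path-connected component consisting of some of the $A_i$'s would contradict the minimality of $\{A_1,\ldots,A_\ell\}$. The only notable difference is that you justify $\partial_{d+1}\bigl(\sum_{i\in S}A_i\bigr)=0$ directly by noting that the $d$-faces of an isolated component are disjoint from the $F_j$'s and from the remaining $A_{i'}$'s, whereas the paper invokes the forward-referenced Proposition \ref{prop:ddimcycle_is_dcycle}; your version is self-contained but otherwise equivalent.
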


\begin{proof}
We will first show that each $d$-face of $\Phi$ is contained in an even number of the $(d+1)$-faces of $\Phi$.  Let $f$ be any $d$-face of $\Phi$.  We have three cases to consider.

First suppose that $v \in f$.  In this case, $f$ is not contained in any of the $A_i$'s and so $f$ is a subset of $F_{j}\cup \{v\}$ for some $j$.  So we have $f\setminus \{v\} \subseteq F_{j}$.  Since $\Omega$ is a $d$-dimensional cycle $f\setminus \{v\}$ belongs to an even number of the $d$-faces $F_1,\ldots,F_k$.  Therefore $f$ belongs to an even number of the $(d+1)$-faces $F_{i}\cup \{v\}$ and so belongs to an even number of the $(d+1)$-faces of $\Phi$.

Now suppose that $v \notin f$ and that $f$ belongs to at least one $(d+1)$-face of the form $F_{j}\cup \{v\}$ for some $j$. In this case we must have $f = F_j$ and so $f \not \subseteq F_i \cup \{v\}$ for any $i\neq j$.  Thus $f$ appears exactly once on the right-hand-side of (\ref{eq:boundaryofcycle}) and since this equation holds over $\Z_2$, $f$ must be contained in an odd number of the $A_i$'s.  Hence overall $f$ is contained in an even number of the $(d+1)$-faces of $\Phi$.

Finally suppose that $v \notin f$ and that $f$ does not belong to any $(d+1)$-faces of the form $F_{j}\cup \{v\}$.  Then $f$ is not a $d$-face of $\Omega$.  Again, since (\ref{eq:boundaryofcycle}) holds over $\Z_2$, we know that $f$ belongs to an even number of the $A_i$'s.  Thus $f$ is contained in an even number of the $(d+1)$-faces of $\Phi$.

Therefore we know that the $(d+1)$-path-connected components of $\Phi$ are $(d+1)$-dimensional cycles.  Note that the $(d+1)$-faces $F_i\cup \{v\}$ all lie in the same $(d+1)$-path-connected component of $\Phi$ since $\Omega$ is $d$-path-connected.  Recall from above that, for any $j$, a $d$-face belonging to $F_j\cup \{v\}$ which does not contain $v$ is equal to $F_j$ and must belong to at least one of the $A_i$'s by (\ref{eq:boundaryofcycle}).  Thus at least one of the $A_i$'s belongs to the $(d+1)$-path-connected component of $\Phi$ which contains the $F_j\cup \{v\}$'s.  Therefore if $\Phi$ has any other $(d+1)$-path-connected component then it consists solely of a strict subset of the $A_i$'s.  Without loss of generality let these faces be $A_1,\ldots,A_r$ where $r < \ell$.  We know that these $(d+1)$-path-connected components are all $(d+1)$-dimensional cycles and so, as we will see in Proposition \ref{prop:ddimcycle_is_dcycle}, which says that the sum of the $(d+1)$-faces of such a cycle form a homological $(d+1)$-cycle, we have that
\[
    \partial_{d+1} \left( \sum_{j=1}^{r} A_j \right) =0
\]
and so
\[
    \partial_{d+1} \left( \sum_{j=1}^{\ell} A_j \right) = \partial_{d+1} \left( \sum_{j=1}^{r} A_j \right) + \partial_{d+1} \left( \sum_{j=r+1}^{\ell} A_j \right) = \partial_{d+1} \left( \sum_{j=r+1}^{\ell} A_j \right).
\]
Therefore by (\ref{eq:boundaryofcycle}) we have
\[
   \partial_{d+1} \left( \sum_{j=r+1}^{\ell} A_{j} \right)= \sum_{i=1}^k F_i
\]
which contradicts the minimality of our choice of $A_1,...,A_\ell$.  Hence $\Phi$ has only one $(d+1)$-path-connected component and so it is a $(d+1)$-dimensional cycle.
\end{proof}

The notion of a $d$-dimensional cycle extends the classical concept of a pseudo $d$-manifold that appears in algebraic topology.  See for example \cite{Munk84}.

\begin{definition}[{\bf pseudo $d$-manifold}]
A pure $d$-dimensional $d$-path-connected simplicial complex $\Delta$ is a {\bf pseudo $d$-manifold}  if every $(d-1)$-face of $\Delta$ is contained in exactly two $d$-faces of $\Delta$.
\end{definition}

Notice that the $2$-dimensional cycles in Figures \ref{fig:octacycle} through \ref{fig:real_proj_plane} are all examples of pseudo $2$-manifolds. The simplicial complexes in Figures \ref{fig:two_pyramid} and \ref{fig:pinched_sphere} are not pseudo $2$-manifolds as they have $1$-faces belonging to more than two $2$-faces.

Notice that a graph cycle is a $1$-dimensional cycle, but a $1$-dimensional cycle need not be a graph cycle.  See Figure \ref{fig:1_dim_cycle} for an example.  With this in mind, we introduce a notion of minimality into the idea of a $d$-dimensional cycle.

\begin{figure}[h!]
{\centering
    \includegraphics[height=1.1in]{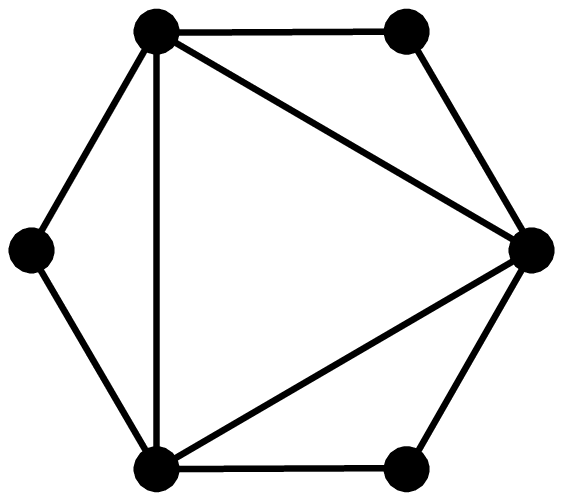}
    \caption {A $1$-dimensional cycle which is not a graph cycle.} \label{fig:1_dim_cycle}

}
\end{figure}

\begin{definition}[{\bf face-minimal $d$-dimensional cycle}]
A $d$-dimensional cycle $\Omega$ is {\bf face-minimal} when there are no $d$-dimensional cycles on a strict subset of the $d$-faces of $\Omega$.
\end{definition}

Notice that a $1$-dimensional cycle is a graph cycle if and only if it is face-minimal.  In Figure \ref{fig:2d_cycles} the only $2$-dimensional cycle which is not face-minimal is \ref{fig:two_pyramid}.

\begin{lem}[{\bf $d$-Dimensional cycles can be partitioned into face-minimal cycles}] \label{lem:cycle_decomp}
Any $d$-dimensional cycle $\Omega$ can be written as a union of face-minimal $d$-dimensional cycles $\Phi_1,\ldots,\Phi_n$ such that every $d$-face of $\Omega$ belongs to some $\Phi_i$ and such that $\Phi_i$ and $\Phi_j$ have no $d$-faces in common when $i \neq j$.
\end{lem}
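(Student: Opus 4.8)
The plan is to proceed by strong induction on the number $m$ of $d$-faces of $\Omega$. If $\Omega$ is itself face-minimal there is nothing to prove: take $n=1$ and $\Phi_1=\Omega$. So suppose $\Omega$ is not face-minimal. Then among all $d$-dimensional cycles whose $d$-faces form a subset of the $d$-faces of $\Omega$, pick one, call it $\Phi$, whose set of $d$-faces is minimal with respect to inclusion; such a $\Phi$ is automatically face-minimal, and by hypothesis it is supported on a strict subset of the $d$-faces of $\Omega$. Let $\Omega'$ be the pure $d$-dimensional complex generated by the $d$-faces of $\Omega$ that do not lie in $\Phi$, and let $C_1,\ldots,C_s$ be the $d$-path-connected components of $\Omega'$. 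Because $\Phi$ has at least one $d$-face, each $C_i$ has strictly fewer than $m$ $d$-faces. The strategy is to show that every $C_i$ is again a $d$-dimensional cycle; then the inductive hypothesis applies to each of $\Phi, C_1,\ldots,C_s$, and stringing together the face-minimal cycles obtained from each of them gives the desired decomposition of $\Omega$. These pieces cover each $d$-face of $\Omega$ exactly once: $\Phi$ and $\Omega'$ are $d$-face-disjoint by construction, the $C_i$ are pairwise $d$-face-disjoint as distinct $d$-path-connected components, and within each summand disjointness holds by the inductive hypothesis. Note that since $\Omega$ is pure $d$-dimensional, covering its $d$-faces covers all of its faces.

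The crux is showing that each $C_i$ is a $d$-dimensional cycle. Since $C_i$ is pure $d$-dimensional and $d$-path-connected by construction, it remains only to verify that each of its $(d-1)$-faces lies in an even number of its $d$-faces. The key point is this: if $F$ and $F'$ are distinct $d$-faces of any pure $d$-dimensional complex that share a common $(d-1)$-face, then $|F\cap F'|=d$, so $F,F'$ is a $d$-path and $F,F'$ lie in a common $d$-path-connected component. Hence, given a $(d-1)$-face $g$ of $C_i$, every $d$-face of $\Omega'$ containing $g$ already lies in $C_i$, so the number of $d$-faces of $C_i$ containing $g$ equals the number of $d$-faces of $\Omega'$ containing $g$, which in turn equals the number of $d$-faces of $\Omega$ containing $g$ minus the number of $d$-faces of $\Phi$ containing $g$. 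The first of these counts is even because $\Omega$ is a $d$-dimensional cycle. The second is even as well: it is $0$ if $g$ is not a face of $\Phi$, and it is even if $g$ is a $(d-1)$-face of $\Phi$, since $\Phi$ is a $d$-dimensional cycle. Therefore the difference is even, and $C_i$ is a $d$-dimensional cycle.

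I expect the main obstacle to be precisely this parity bookkeeping --- and within it, the observation that the $d$-faces through a fixed $(d-1)$-face cannot be distributed among two different $d$-path-connected components. This is exactly what guarantees that ``remove a face-minimal sub-cycle and then split what remains into its $d$-path-connected components'' produces complexes that again satisfy both defining conditions of a $d$-dimensional cycle. The remaining ingredients --- that distinct $d$-path-connected components share no $d$-faces, that the number of $d$-faces strictly decreases, and that the assembled pieces fit together --- are routine.
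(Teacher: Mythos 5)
Your proposal is correct and follows essentially the same route as the paper: remove a face-minimal sub-cycle, split the remaining $d$-faces into $d$-path-connected components, and use the parity of the count of $d$-faces through each $(d-1)$-face (together with the observation that all $d$-faces through a fixed $(d-1)$-face lie in one $d$-path-connected component) to see that each component is again a $d$-dimensional cycle. The only cosmetic difference is that you package the repetition as strong induction on the number of $d$-faces, where the paper iterates the procedure and appeals to finiteness; you are also slightly more careful in justifying the existence of a face-minimal sub-cycle.
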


\begin{proof}
If $\Omega$ is a face-minimal $d$-dimensional cycle then we are done.  So suppose that $\Omega$ is not face-minimal and let $\Phi_1$ be a face-minimal $d$-dimensional cycle on a strict subset of the $d$-faces of $\Omega$.  Consider the $d$-path-connected components of the complex $\Omega_1$ whose facets are the $d$-faces of $\Omega$ not belonging to $\Phi_1$. We claim that each such component is a $d$-dimensional cycle.  Since each component is $d$-path-connected by definition, we need only show that each $(d-1)$-face of the complement is contained in an even number of $d$-faces.

Let $\Psi$ be one of the $d$-path-connected components of $\Omega_1$ and let $f$ be a $(d-1)$-face of $\Psi$.  Suppose first that $f$ also belongs to one of the $d$-faces of $\Phi_1$.  Since $\Phi_1$ is a $d$-dimensional cycle $f$ belongs to an even number of its $d$-faces.  However $\Omega$ is also a $d$-dimensional cycle and $f$ belongs to an even number of its $d$-faces.  Since $\Omega_1$ is the complex whose facets are the $d$-faces of $\Omega$ not in $\Phi_1$ $f$ belongs to an even number of $d$-faces in $\Omega_1$.  However the collection of all $d$-faces of $\Omega_1$ containing $f$ is clearly $d$-path-connected and so these $d$-faces all lie in $\Psi$.  Hence $f$ belongs to an even number of $d$-faces of $\Psi$.

If $f$ does not belong to any $d$-faces of $\Phi_1$ then all of the $d$-faces of $\Omega$ which contain $f$ lie in $\Omega_1$.  Since there are an even number of such faces and they are $d$-path-connected they all lie in $\Psi$.  Hence $f$ belongs to an even number of $d$-faces of $\Psi$.  Therefore each $d$-path-connected component of $\Omega_1$ is a $d$-dimensional cycle.

Each of these components is either a face-minimal $d$-dimensional cycle, or contains a face-minimal $d$-dimensional cycle on a strict subset of its $d$-faces.  We may repeat the argument above on the simplicial complex whose facets are the $d$-faces of $\Omega_1$ belonging to the components that are not face-minimal cycles.  Iterating this procedure we see that, since we have a finite number of $d$-faces, eventually the procedure must terminate.  We are left with face-minimal cycles $\Phi_1,\ldots,\Phi_n$ in which every $d$-face of $\Omega$ belongs to some $\Phi_i$ and, by our construction, no two distinct cycles $\Phi_i$ and $\Phi_j$ share a $d$-face.
\end{proof}

\begin{prop}
A pseudo $d$-manifold is a face-minimal $d$-dimensional cycle.
\end{prop}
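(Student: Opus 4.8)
The plan is to check the two defining requirements of a face-minimal $d$-dimensional cycle in turn: that a pseudo $d$-manifold $\Omega$ is a $d$-dimensional cycle at all, and that no $d$-dimensional cycle sits on a strict subset of its $d$-faces. The first is essentially immediate from the definitions and the second is a short argument by contradiction that exploits $d$-path-connectedness.

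First I would observe that a pseudo $d$-manifold $\Omega$ is itself a $d$-dimensional cycle: by hypothesis it is pure $d$-dimensional and $d$-path-connected, so condition (1) holds, and since every $(d-1)$-face of $\Omega$ is contained in exactly two $d$-faces, it is in particular contained in an even number of them, so condition (2) holds.

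For face-minimality, suppose toward a contradiction that some $d$-dimensional cycle $\Phi$ has $d$-faces forming a strict subset of the $d$-faces of $\Omega$. Since $\Phi$ is a nonempty pure $d$-dimensional complex (indeed it has at least $d+2$ facets by Proposition \ref{prop:smallest_cycle}), I would pick a $d$-face $F$ of $\Phi$ and a $d$-face $G$ of $\Omega$ that is not a $d$-face of $\Phi$. As $\Omega$ is $d$-path-connected there is a $d$-path $F = F_1, F_2, \ldots, F_m = G$ in $\Omega$; taking the least index $i$ with $F_i$ a $d$-face of $\Phi$ and $F_{i+1}$ not, put $F' = F_i$, $G' = F_{i+1}$, and $f = F' \cap G'$. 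Then $|f| = d$, so $f$ is a $(d-1)$-face; since $f \subseteq F' \in \Phi$ it is a $(d-1)$-face of $\Phi$; and $f \subseteq G'$.

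Now the cycle condition on $\Phi$ forces $f$ to lie in an even number of $d$-faces of $\Phi$, and $F'$ is one of them, so there is a second $d$-face $F'' \neq F'$ of $\Phi$ containing $f$. Then $F'$, $F''$, and $G'$ are three distinct $d$-faces of $\Omega$ all containing the $(d-1)$-face $f$ (the faces $F'$, $F''$ differ by choice, and $G'$ is not a $d$-face of $\Phi$ whereas $F', F''$ are), which contradicts the defining property of a pseudo $d$-manifold that $f$ lie in exactly two $d$-faces of $\Omega$. Hence no such $\Phi$ exists and $\Omega$ is face-minimal. The only step needing any care is the extraction of the ``crossing'' $(d-1)$-face $f$ from a $d$-path joining a face inside $\Phi$ to one outside it, and even that is routine, so I do not anticipate a serious obstacle.
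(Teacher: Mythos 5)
Your proof is correct, and it takes a genuinely different route from the paper's. The paper proves face-minimality by invoking Lemma \ref{lem:cycle_decomp}: if the pseudo $d$-manifold $\Delta$ were not face-minimal, it could be partitioned into $n \geq 2$ face-minimal $d$-dimensional cycles sharing no $d$-faces; $d$-path-connectedness then yields two faces $F_1 \in \Phi_i$, $F_2 \in \Phi_j$ ($i \neq j$) meeting in a $(d-1)$-face $f$, and the evenness condition in each of $\Phi_i$ and $\Phi_j$ forces $f$ into at least four $d$-faces of $\Delta$, contradicting the pseudo-manifold condition. You instead work directly with a single hypothetical sub-cycle $\Phi$ on a strict subset of the $d$-faces, extract a ``crossing'' $(d-1)$-face $f = F' \cap G'$ from a $d$-path joining a face of $\Phi$ to a face outside it, and use the evenness condition in $\Phi$ alone to produce a third $d$-face of $\Omega$ containing $f$ --- already a contradiction with ``exactly two.'' The trade-off: the paper's argument leans on the (moderately involved) decomposition lemma but gets the contradiction almost for free once the partition is in hand, whereas yours is self-contained and more elementary, needing only the definition of a $d$-dimensional cycle and the crossing trick; both arguments bottom out in the same mechanism of overcounting the $d$-faces containing a single $(d-1)$-face. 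One small point of care, which you handle correctly: you need the even number of $d$-faces of $\Phi$ containing $f$ to be at least two rather than zero, and this follows because $f \subseteq F'$ with $F'$ a $d$-face of $\Phi$.
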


\begin{proof}
Let $\Delta$ be a pseudo $d$-manifold.  Then $\Delta$ is $d$-path connected and every $(d-1)$-face in $\Delta$ belongs to exactly two $d$-faces.  Hence $\Delta$ is a $d$-dimensional cycle.  Suppose that $\Delta$ is not face-minimal.  By Lemma \ref{lem:cycle_decomp} we can partition $\Delta$ into face-minimal $d$-dimensional cycles $\Phi_1,\ldots,\Phi_n$ where $n \geq 2$.  Since $\Delta$ is $d$-path connected there must exist some pair of indices $i,j$ with $i \neq j$ with $F_1 \in \Phi_i$ and $F_2 \in \Phi_j$ where $F_1 \cap F_2 = f$ for some $(d-1)$-face $f$ of $\Delta$.  Since $\Phi_i$ is a $d$-dimensional cycle then $f$ belongs to an even number of $d$-faces of $\Phi_i$ and similarly $f$ belongs to an even number of $d$-faces of $\Phi_j$.  By Lemma \ref{lem:cycle_decomp}, these $d$-faces are all distinct which means that $f$ belongs to at least four $d$-faces of $\Delta$.  This is a contradiction since $\Delta$ is a pseudo $d$-manifold.  Hence $\Delta$ is a face-minimal $d$-dimensional cycle.
\end{proof}

The converse of this theorem does not hold.  The simplicial complex in Figure \ref{fig:pinched_sphere}, a triangulated sphere pinched along a $1$-dimensional face, is a counter-example.  It is a face-minimal $2$-dimensional cycle, but it is not a pseudo $2$-manifold as it has a $1$-dimensional face, $\{x,y\}$, belonging to four distinct $2$-dimensional faces.

A pseudo $d$-manifold can be classified as either {\bf orientable} or {\bf non-orientable} and this idea can be generalized to the case of $d$-dimensional cycles.  Recall from Section \ref{sec:prelim} that an orientation of a face in a simplicial complex is simply an ordering of its vertices.

\begin{definition}[{\bf induced orientation}]
Let $F$ be an oriented $d$-face of a simplicial complex $\Omega$ and let $v$ be any vertex of $\Omega$ contained in $F$. The {\bf induced orientation} of the $(d-1)$-face $F\setminus\{v\}$ is given in the following way:
\begin{itemize}
\item if $v$ is in an odd position of the ordering of the vertices of $F$ then the orientation of $F\setminus\{v\}$ is given by the ordering of its vertices in the orientation of $F$
\item if $v$ is in an even position of the ordering of the vertices of $F$ then the orientation of $F\setminus\{v\}$ is given by any odd permutation of the ordering of its vertices in the orientation of $F$
\end{itemize}
\end{definition}

\begin{example}
Let $\Delta$ be a simplicial complex containing the oriented $4$-face $[a,b,c,d,e]$.  The induced orientation of the $3$-face $\{a,b,d,e\}$ is $a<b<d<e$ and the induced orientation of the $3$-face $\{a,c,d,e\}$ is $a<c<e<d$.
\end{example}

Notice that a $(d-1)$-face which belongs to more than one oriented $d$-face in a simplicial complex will have an induced orientation corresponding to each oriented $d$-face to which it belongs.  These induced orientations may be non-equivalent.

\begin{definition}[{\bf orientable $d$-dimensional cycle}]\label{def:orient_ddim_cycle}
Let $\Omega$ be a $d$-dimensional cycle.  If it is possible to choose orientations of the $d$-faces of $\Omega$ such that for any $(d-1)$-face of $\Omega$ its induced orientations are divided equally between the two orientation classes then we say that $\Omega$ is {\bf orientable}.  Otherwise $\Omega$ is {\bf non-orientable}.
\end{definition}

Note that when we talk about the oriented $d$-faces of an orientable $d$-dimensional cycle we are referring to any set of orientations that is consistent with Definition \ref{def:orient_ddim_cycle}.

Many of the combinatorial complexities that exist in higher dimensions are not present in the $1$-dimensional case.  Non-orientable cycles are an example of this.

\begin{prop} \label{prop:1_dim_orient}
Any $1$-dimensional cycle is orientable.
\end{prop}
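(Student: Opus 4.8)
The plan is to reduce the statement to a classical fact about balanced orientations of Eulerian-type graphs and then read it off from the decomposition already established in Lemma~\ref{lem:cycle_decomp}. First I would unwind the definitions: a $1$-dimensional cycle $\Omega$ is, concretely, a connected graph in which every vertex has even degree, and choosing orientations of its $1$-faces amounts to choosing, for each edge, which of its two endpoints is ``listed first''. With the conventions of Section~\ref{sec:prelim}, the oriented $1$-face $[a,b]$ induces the orientation $-[a]$ on $\{a\}$ and $+[b]$ on $\{b\}$ (this is exactly the boundary $\partial_1[a,b]=[b]-[a]$). Hence Definition~\ref{def:orient_ddim_cycle} specializes, in dimension $1$, to the requirement that the edges of $\Omega$ can be oriented so that at every vertex $w$ the number of incident edges having $w$ listed first equals the number having $w$ listed second.

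Next I would invoke Lemma~\ref{lem:cycle_decomp} to write $\Omega = \Phi_1 \cup \cdots \cup \Phi_n$, where the $\Phi_i$ are face-minimal $1$-dimensional cycles, every $1$-face of $\Omega$ lies in exactly one $\Phi_i$, and no two of the $\Phi_i$ share a $1$-face. A face-minimal $1$-dimensional cycle is a graph cycle (as noted immediately after the definition of face-minimal, and as follows from Lemma~\ref{lem:contains_cycle}), so each $\Phi_i$ is a graph cycle with vertex list $v_0^{(i)},\ldots,v_{m_i}^{(i)}$. On $\Phi_i$ I would orient the edges cyclically, taking the oriented $1$-faces
\[
    [v_0^{(i)},v_1^{(i)}],\,[v_1^{(i)},v_2^{(i)}],\,\ldots,\,[v_{m_i-1}^{(i)},v_{m_i}^{(i)}],\,[v_{m_i}^{(i)},v_0^{(i)}].
\]
Because the $\Phi_i$ share no $1$-faces, these choices fit together into a single well-defined orientation of all $1$-faces of $\Omega$.

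Finally I would check the balancing condition at an arbitrary vertex $w$ of $\Omega$. In each graph cycle $\Phi_i$ containing $w$, the vertex $w$ has degree exactly $2$ and, reading off the cyclic orientation above, $w$ is listed first in exactly one edge of $\Phi_i$ and listed second in exactly one edge of $\Phi_i$ --- equivalently, the coefficient of $[w]$ in $\partial_1$ of the associated $1$-chain of $\Phi_i$ is $1-1=0$, which is precisely the computation $\partial_1(c)=0$ appearing in the proof of Theorem~\ref{thm:1hom_graph_cycle}. Summing over the cycles $\Phi_i$ that contain $w$, the vertex $w$ is listed first in exactly $|\{i : w \in \Phi_i\}|$ edges of $\Omega$ and listed second in exactly the same number, while $\deg_\Omega(w) = 2\,|\{i : w \in \Phi_i\}|$. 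Hence the induced orientations at $w$ split evenly between the two classes, and since $w$ was arbitrary, $\Omega$ is orientable.

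As for the main obstacle: there is no genuine analytic difficulty here. The entire content lies in correctly matching Definition~\ref{def:orient_ddim_cycle} with the graph-theoretic notion of an in/out-balanced orientation and in confirming that the per-component cyclic orientations are consistent on shared vertices (they are, because distinct $\Phi_i$ overlap only in vertices, never in edges). The one point warranting care is the induced-orientation bookkeeping for $0$-faces, where the ``odd permutation'' clause of the induced-orientation definition must be read formally as the sign $-1$; once that is pinned down the verification is immediate.
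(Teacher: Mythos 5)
Your proof is correct and follows essentially the same route as the paper: decompose $\Omega$ via Lemma~\ref{lem:cycle_decomp} into edge-disjoint face-minimal $1$-dimensional cycles (i.e.\ graph cycles), orient each one cyclically, and observe that at every vertex the induced orientations from each cycle come in opposite pairs, so they balance overall. Your version merely spells out the $0$-face induced-orientation bookkeeping more explicitly than the paper does.
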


\begin{proof}
Let $\Omega$ be a $1$-dimensional cycle.  If $\Omega$ is face-minimal then it is a graph cycle.  It is straightforward to see that any graph cycle is orientable by choosing a ``direction'' in which to traverse the cycle and orienting each face in a way that is consistent with this direction.

If $\Omega$ is not face-minimal then by Lemma \ref{lem:cycle_decomp} we can partition the $1$-faces of $\Omega$ into face-minimal $1$-dimensional cycles $\Phi_1,\ldots,\Phi_n$ where $n \geq 2$.  For each $1\leq i \leq n$, $\Phi_i$ is orientable.  Let $v$ be any vertex of $\Omega$.  Then $v$ belongs to some subset of the cycles $\Phi_1,\ldots,\Phi_n$.  In each such cycle there are two induced orientations of $v$ and they are opposite to each other.  Thus overall the induced orientations of $v$ in $\Omega$ are divided equally between the two orientation classes.  Therefore $\Omega$ is orientable.
\end{proof}

The $2$-dimensional cycles given in Figure \ref{fig:2d_cycles} are all examples of orientable $2$-dimensional cycles except for the triangulation of the real projective plane given in Figure \ref{fig:real_proj_plane} which is a non-orientable $2$-dimensional cycle.

\Comment{One particularly simple, orientable $d$-dimensional cycle is the boundary of a $(d+1)$-simplex.  We denote the $d$-dimensional $d$-complete complex on $n$ vertices by $\Lambda_n^d$.}

One particularly simple, orientable $d$-dimensional cycle is the boundary of a $(d+1)$-simplex.  We denote the $d$-dimensional $d$-complete complex on $n$ vertices by $\Lambda_n^d$.

\begin{prop} [{\bf The smallest $d$-dimensional cycle is the boundary of a $(d+1)$-simplex}] \label{prop:smallest_cycle}
A $d$-dimensional cycle can have no fewer than $d+2$ vertices.  If $\Omega$ is a $d$-dimensional cycle on $d+2$ vertices then $\Omega = \Lambda_{d+2}^{d}$.  In addition, $\Lambda_{d+2}^d$ is an orientable $d$-dimensional cycle.
\end{prop}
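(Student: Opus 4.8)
The plan is to treat the three assertions in turn, each reducing to elementary counting of $d$- and $(d+1)$-subsets of a $(d+2)$-element set. For the lower bound: a $d$-dimensional cycle $\Omega$ is pure of dimension $d$, so it contains at least one $d$-face $F$, whence $|V(\Omega)| \geq |F| = d+1$. If $|V(\Omega)| = d+1$, then $F = V(\Omega)$ is the only $d$-face, so $\Omega = \langle F\rangle$ is a single $d$-simplex; but then every $(d-1)$-face of $F$ lies in exactly one $d$-face of $\Omega$, an odd number, contradicting condition (2) of the definition of a $d$-dimensional cycle. Hence $|V(\Omega)| \geq d+2$.

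Next, suppose $|V(\Omega)| = d+2$. The key observation is that a $(d-1)$-face $f$ of $\Omega$ has exactly two vertices of $V(\Omega)$ outside it, say $x$ and $y$, so the only candidates for $d$-faces of $\Omega$ containing $f$ are $f\cup\{x\}$ and $f\cup\{y\}$. Since $\Omega$ is pure, $f$ lies in at least one $d$-face; by condition (2) it lies in an even number of them; since that number is at most $2$, it is exactly $2$, so both $f\cup\{x\}$ and $f\cup\{y\}$ are $d$-faces of $\Omega$. Now fix a $d$-face $F$ of $\Omega$ and let $w$ be the unique vertex of $V(\Omega)\setminus F$. For each $v\in F$, the $(d-1)$-face $F\setminus\{v\}$ lies in exactly two $d$-faces, one being $F$ and the other necessarily $(F\setminus\{v\})\cup\{w\}$; hence $(F\setminus\{v\})\cup\{w\}\in\Omega$. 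These $d+1$ faces (one per $v\in F$) all contain $w$, are pairwise distinct, and together with $F$ give $d+2$ distinct $d$-faces. Since a set of $d+2$ vertices has only $d+2$ subsets of size $d+1$, every such subset is a face of $\Omega$, i.e.\ $\Omega$ is $d$-complete, so $\Omega = \Lambda_{d+2}^{d}$.

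Finally, $\Lambda_{d+2}^d$ itself is an orientable $d$-dimensional cycle. It is pure of dimension $d$; it is $d$-path-connected because any two distinct $(d+1)$-subsets of a $(d+2)$-set meet in exactly $d$ elements, so any two $d$-faces are adjacent; and the argument above shows every $(d-1)$-face lies in exactly two $d$-faces, so it is a $d$-dimensional cycle (in fact a pseudo $d$-manifold). For orientability, write $V(\Lambda_{d+2}^d) = \{v_0,\ldots,v_{d+1}\}$, set $G_i = V(\Lambda_{d+2}^d)\setminus\{v_i\}$, and orient $G_i$ by $(-1)^i[v_0,\ldots,\widehat{v_i},\ldots,v_{d+1}]$. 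Given a $(d-1)$-face $f = V(\Lambda_{d+2}^d)\setminus\{v_i,v_j\}$ with $i<j$, one locates $v_j$ in the ordering of $G_i$ (position $j$) and $v_i$ in the ordering of $G_j$ (position $i+1$) and applies the parity rule of the definition of induced orientation; accounting also for the prefactors $(-1)^i$ and $(-1)^j$, the induced orientation of $f$ coming from $G_i$ has sign $(-1)^{i+j-1}$ and that coming from $G_j$ has sign $(-1)^{i+j}$ relative to the common ordering of $f$. These differ by $-1$, so the two induced orientations of every $(d-1)$-face fall in opposite orientation classes, and $\Lambda_{d+2}^d$ is orientable.

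I expect the main obstacle to be the sign bookkeeping in the orientability step: one must track simultaneously the position parities inside the orderings of $G_i$ and $G_j$ and the $(-1)^i,(-1)^j$ prefactors, and it is easy to be off by a sign. This is routine once set up carefully. The combinatorial core of the first two parts --- forcing an even count that is at least $1$ to equal $2$, and then propagating from a single facet to all $d+2$ facets --- is straightforward.
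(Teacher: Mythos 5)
Your proof is correct and follows essentially the same route as the paper: forcing each $(d-1)$-face into exactly two $d$-faces, propagating from one facet to all $d+2$ facets to get $d$-completeness, and orienting the facets of $\Lambda_{d+2}^d$ by the orientations induced from the ordered $(d+1)$-simplex (your $(-1)^i$ prefactors are exactly that induced orientation, and your sign computation checks out). Your handling of the lower bound is in fact slightly more complete than the paper's one-line remark, since you explicitly rule out the $(d+1)$-vertex case via the evenness condition.
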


\begin{proof}
It is clear that any $d$-dimensional cycle must have at least $d+2$ vertices since a single $d$-face contains $d+1$ vertices.  It is easy to see that any two $d$-faces of $\Lambda_{d+2}^{d}$ have $d$ vertices in common and so are connected by a $d$-path.  Also, each set of $d$ vertices in $\Lambda_{d+2}^{d}$ belongs to exactly two $d$-faces.  Hence $\Lambda_{d+2}^{d}$ is a $d$-dimensional cycle.

Conversely, let $\Omega$ be any $d$-dimensional cycle on $d+2$ vertices.  There are only $d+2$ possible $d$-faces on a set of $d+2$ vertices and so in order to show that $\Omega$ is $d$-complete we must show that it has $d+2$  distinct $d$-faces.  Let $F$ be any $d$-face of $\Omega$ and let $f$ be one of its $(d-1)$-faces.  We know that $f$ must belong to at least one other $d$-face of $\Omega$ since it is a $d$-dimensional cycle.  There is only one vertex $v$ of $\Omega$ not already contained in $F$ and so $f \cup \{v\}$ must be a $d$-face of $\Omega$.   Since $F$ contains $d+1$ of these distinct $(d-1)$-faces which all must lie in another $d$-face of $\Omega$ this gives rise to $d+1$ distinct $d$-faces of $\Omega$ which all contain $v$.  Therefore $\Omega$ contains $d+2$ distinct $d$-faces including $F$. Hence $\Omega$ is $d$-complete and so $\Omega = \Lambda_{d+2}^d$.

We would like to show that $\Lambda_{d+2}^d$ is orientable.  Each $(d-1)$-face of $\Lambda_{d+2}^d$ belongs to just two $d$-faces and so we need to ensure that there is a way to orient the $d$-faces of $\Lambda_{d+2}^d$ so that the orientations induced on each $(d-1)$-face are opposite to each other.

We propose assigning orientations to the $d$-faces in the following way.  Suppose that $v_1,\ldots,v_{d+2}$ are the vertices of $\Lambda_{d+2}^d$. Let the orientation of each $d$-face be the induced orientation that results from thinking of the $d$-face as a subface of the oriented simplex $[v_1,\ldots,v_{d+2}]$.

Let $f$ be any $(d-1)$-face of $\Lambda_{d+2}^d$.  We know that $f$ belongs to exactly two $d$-faces of $\Lambda_{d+2}^d$ which we will call $F$ and $G$. Since $\Lambda_{d+2}^d$ has $d+2$ vertices then we have $V(\Lambda_{d+2}^d) \setminus f = \{v_s,v_t\}$ for some $1\leq s < t \leq d+2$ where, without loss of generality, we have $v_s \in F$ and $v_t \in G$.  Since $s < t$ we know that $v_s$ appears before $v_t$ in the ordering above.  The ordering induced on $f$ by $F$ is achieved by first removing $v_t$ from $[v_1,\ldots,v_{d+2}]$ to induce an ordering on $F$ and then removing $v_s$ to induce an ordering on $f$, applying odd permutations where necessary.  The ordering induced on $f$ by $G$ is achieved by first removing $v_s$ from $[v_1,\ldots,v_{d+2}]$ to induce an ordering on $G$ and then removing $v_t$ to induce an ordering on $f$, again applying odd permutations where necessary.  Note that the removal of $v_t$ does not change whether or not $v_s$ is in an even or odd position in the ordering since it appears before $v_t$.  However removing $v_s$ before $v_t$ causes $v_t$ to move either from an even position to an odd one or from an odd one to an even one.  Consequently the orientation of $f$ induced by $F$ is necessarily an odd permutation of the orientation induced by $G$.  Therefore the two orientations of $f$ are opposite.  Hence $\Lambda_{d+2}^d$ is an orientable $d$-dimensional cycle.
\end{proof}

\begin{example}
The hollow tetrahedron $\Lambda_4^2$ is shown in Figure \ref{fig:hollow_tetra}.  It is the boundary of a $3$-simplex and, by Proposition \ref{prop:smallest_cycle}, it is the $2$-dimensional cycle on the smallest number of vertices.
\end{example}

\begin{figure}[b]
{\centering
    \includegraphics[height=1.2in]{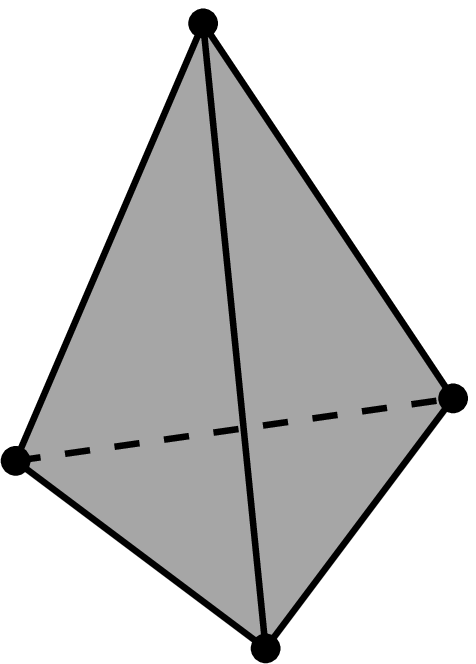}
    \caption {The $2$-dimensional $2$-complete simplicial complex.} \label{fig:hollow_tetra}
}
\end{figure}


\section{A Combinatorial Condition for Non-zero Homology}
In this section we will show that a $d$-dimensional cycle is the right combinatorial structure to describe the idea of non-zero $d$-dimensional homology over a field of characteristic 2.  We will also show that orientable $d$-dimensional cycles produce non-zero $d$-dimensional homology over any field.

We begin by investigating the relationship between the combinatorial structure of a simplicial complex and its simplicial homology over $\Z_2$.  In this field the role played by the coefficients in a $d$-chain is reduced to indicating whether or not a face is present.  As well since $-1=1$ over $\Z_2$ the concept of an orientation of a face is unnecessary as all orientations of a face are equivalent.  This allows us to more easily examine the connections between the combinatorics of the simplicial complex and the more algebraic concepts of the $d$-cycle and the $d$-boundary.  In fact, over $\Z_2$, we lose no information when we translate between the $d$-chain $\sum_{i=1}^m F_i$ and the support complex $\langle F_1,\ldots,F_m \rangle$.  This makes the field $\Z_2$ an ideal setting to investigate the correspondence between complexes which generate non-zero simplicial homology and their combinatorial properties.

\Comment{We begin by investigating the relationship between the combinatorial structure of a simplicial complex and its simplicial homology over $\Z_2$.  In this field the role played by the coefficients in a $d$-chain is reduced to indicating whether or not a face is present.  As well since $-1=1$ over $\Z_2$ the concept of an orientation of a face is unnecessary as all orientations of a face are equivalent.  This allows us to more easily examine the connections between the combinatorics of the simplicial complex and the more algebraic concepts of the $d$-cycle and the $d$-boundary.}

The following proposition demonstrates the relationship between $d$-dimensional cycles and homological $d$-cycles over $\Z_2$.

\begin{prop} [{\bf Relationship between $d$-dimensional cycles and homological $d$-cycles}] \label{prop:ddimcycle_is_dcycle}
If $\Omega$ is a $d$-dimensional cycle with $d$-faces $F_1,\ldots,F_k$ then $\sum_{i=1}^k F_i$ is a homological $d$-cycle over $\Z_2$.  Conversely, if $\sum_{i=1}^k F_i$ is a homological $d$-cycle over $\Z_2$ then the $d$-path-connected components of $\langle F_1,\ldots, F_k\rangle$ are $d$-dimensional cycles.
\end{prop}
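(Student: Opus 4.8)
The plan is to prove the two directions by a direct computation with the boundary map over $\Z_2$, where the coefficient of a $(d-1)$-face $f$ in $\partial_d(\sum_i F_i)$ is simply the parity of the number of the $F_i$ that contain $f$.

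For the forward direction, suppose $\Omega$ is a $d$-dimensional cycle with $d$-faces $F_1,\ldots,F_k$. Over $\Z_2$ we have
\[
    \partial_d\left(\sum_{i=1}^k F_i\right) = \sum_{i=1}^k \partial_d(F_i) = \sum_{i=1}^k \sum_{f \subseteq F_i,\ |f|=d} f,
\]
and collecting terms, the coefficient of a given $(d-1)$-face $f$ in this sum is the number of indices $i$ with $f \subseteq F_i$, taken modulo $2$. By condition (2) in the definition of a $d$-dimensional cycle, every $(d-1)$-face of $\Omega$ lies in an even number of the $F_i$, so every such coefficient is $0$. (A $(d-1)$-face not contained in any $F_i$ contributes $0$ trivially.) Hence $\partial_d(\sum_i F_i) = 0$, i.e.\ $\sum_i F_i$ is a homological $d$-cycle over $\Z_2$. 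Here I should be slightly careful to note that the terms $f$ appearing are exactly the $(d-1)$-faces of $\langle F_1,\ldots,F_k\rangle$, and that no cancellation between distinct faces can occur — only repetitions of the same face — which is why the parity count is the whole story.

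For the converse, suppose $\sum_{i=1}^k F_i$ is a homological $d$-cycle over $\Z_2$, and let $\Psi = \langle G_1,\ldots,G_m\rangle$ be one of the $d$-path-connected components of $\langle F_1,\ldots,F_k\rangle$, where $G_1,\ldots,G_m$ is the sublist of the $F_i$ whose faces lie in $\Psi$. By definition $\Psi$ is pure $d$-dimensional and $d$-path-connected, so it remains only to check condition (2): that every $(d-1)$-face $f$ of $\Psi$ lies in an even number of $G_1,\ldots,G_m$. The key observation is that the collection of all $F_i$ containing a fixed $(d-1)$-face $f$ is $d$-path-connected (any two such faces share the $d$-face $f$), hence lies entirely within a single $d$-path-connected component; so the $F_i$ containing $f$ are precisely the $G_j$ containing $f$. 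Since $\partial_d(\sum_i F_i) = 0$ over $\Z_2$, the coefficient of $f$ in that boundary — which is the parity of the number of $F_i$ containing $f$ — is $0$, so $f$ lies in an even number of the $F_i$, and therefore in an even number of the $G_j$. Thus $\Psi$ is a $d$-dimensional cycle.

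The only real subtlety, and the step I would be most careful about, is the bookkeeping in the converse: that passing to a $d$-path-connected component does not change which faces contain a given $(d-1)$-face $f$. This hinges on the remark that any two $d$-faces sharing $f$ are automatically joined by a (length-two) $d$-path, so all $d$-faces through $f$ sit in one component. Once that is in hand, both directions reduce to the single identity that, over $\Z_2$, the coefficient of $f$ in $\partial_d(\sum F_i)$ equals the number of $F_i$ containing $f$ mod $2$; everything else is routine.
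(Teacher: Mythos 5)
Your proposal is correct and follows essentially the same route as the paper: the forward direction is the same parity count over $\Z_2$ using condition (2) of the definition, and the converse rests on the same key observation (stated in the paper as the components having no $(d-1)$-faces in common) that all $d$-faces containing a fixed $(d-1)$-face form a $d$-path-connected set and hence lie in a single component, so the vanishing of the boundary forces an even count within that component. No gaps.
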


\begin{proof}
Let $\Omega$ be a $d$-dimensional cycle with $d$-faces $F_1,\ldots,F_m$.  Setting $c=\sum_{i=1}^m F_i$ and applying the boundary map $\partial_d$ over $\Z_2$ we have
\[
    \partial_d (c) = \sum_{i=1}^m (e^{i}_1+\cdots +  e^{i}_{d+1})
\]
where $e^{i}_1,\ldots,e^{i}_{d+1}$ are the $d+1$ edges of dimension $d-1$ belonging to $F_i$.  Since the faces $F_1,\ldots,F_m$ form a $d$-dimensional cycle each $(d-1)$-dimensional face appears in an even number of the faces $F_1,\ldots,F_m$.  Hence, since our coefficients belong to $\Z_2$, we have $\partial_d (c) = 0$ and so $c$ is a $d$-cycle.

Let $c=F_1+\cdots +F_m$ be a $d$-cycle over $\Z_2$.  Applying the boundary map $\partial_d$ we have
\[
    0=\partial_d (F_1+\cdots +F_m) = \sum_{i=1}^m (e^{i}_1+\cdots +  e^{i}_{d+1})
\]
where $e^{i}_1,\ldots,e^{i}_{d+1}$ are the $d+1$ edges of dimension $d-1$ belonging to $F_i$.  If the support complex of $c$ is not $d$-path-connected then we can partition this complex into $n$ $d$-path-connected components $\Phi_1,\ldots,\Phi_n$ where $n \geq 2$.
Let $P_i \subseteq \{1,\ldots,m\}$ be such that $F_j \in \Phi_i$ if and only if $j \in P_i$.  Note that $P_1,\ldots,P_n$ form a partition of $\{1,\ldots,m\}$.  Since $\Phi_1,\ldots,\Phi_n$ have no $(d-1)$-faces in common, for each $1\leq i \leq n$ we must have
\[
    \sum_{j \in P_i} (e^{j}_1+\cdots+e^{j}_{d+1})=0
\]
and so, since our sum is over $\Z_2$, we see that each $(d-1)$-face occurring in $\Phi_i$ belongs to an even number of the $d$-faces in $\{F_j | j \in P_i\}$.  Therefore $\Phi_i$ is a $d$-dimensional cycle and hence the $d$-path-connected components of the support complex of $c$ all form $d$-dimensional cycles.
\end{proof}

Due to the close association between homological $d$-cycles and $d$-dimensional cycles over the field $\Z_2$ we are able to obtain a necessary and sufficient combinatorial condition for non-zero homology over any field of characteristic 2.

\begin{thm} [{\bf When simplicial homology vanishes in characteristic 2}]  \label{thm:cycle_hom_z2}
Let $\Delta$ be a simplicial complex and let $k$ be a field of characteristic $2$. Then $\tilde{H}_d(\Delta;k) \neq 0$ if and only if $\Delta$ contains a $d$-dimensional cycle, the sum of whose $d$-faces is not a $d$-boundary.
\end{thm}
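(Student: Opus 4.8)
The plan is to push everything down to the field $\Z_2$ and then read off both implications directly from Proposition~\ref{prop:ddimcycle_is_dcycle}. Since $k$ has characteristic $2$, its prime subfield is $\Z_2$, and $C_\bullet(\Delta;k)$ is obtained from $C_\bullet(\Delta;\Z_2)$ by extension of scalars along $\Z_2\hookrightarrow k$: the boundary matrices have entries in $\{0,\pm1\}$, which reduce mod $2$ to well-defined $\Z_2$-matrices independent of the chosen orientations. Homology commutes with this (flat) base change, so $\dim_k\tilde H_d(\Delta;k)=\dim_{\Z_2}\tilde H_d(\Delta;\Z_2)$; and, by a standard rank argument, a chain with $\Z_2$-coefficients lies in $\im\partial_{d+1}$ over $k$ exactly when it does over $\Z_2$. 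Hence $\tilde H_d(\Delta;k)\neq 0$ iff $\tilde H_d(\Delta;\Z_2)\neq 0$, and for a $d$-dimensional cycle in $\Delta$ the sum of its $d$-faces is a $d$-boundary over $k$ iff it is one over $\Z_2$. So I would work over $\Z_2$ throughout, where a $d$-chain is just its support complex, and I would assume $d\geq 1$ (the case $d=0$ is handled at the end).

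For the ``if'' direction, let $\Omega\subseteq\Delta$ be a $d$-dimensional cycle with $d$-faces $F_1,\dots,F_k$ such that $c=F_1+\cdots+F_k$ is not a $d$-boundary. By the first half of Proposition~\ref{prop:ddimcycle_is_dcycle}, $c$ is a homological $d$-cycle, and it lies in $C_d(\Delta;\Z_2)$ because $\Omega\subseteq\Delta$; since by hypothesis $c\notin\im\partial_{d+1}$, it represents a nonzero class in $\tilde H_d(\Delta;\Z_2)=H_d(\Delta;\Z_2)$.

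For the ``only if'' direction, choose a nonzero homological $d$-cycle $c=F_1+\cdots+F_m$ in $C_d(\Delta;\Z_2)$ that is not a $d$-boundary; its support complex $\Omega=\langle F_1,\dots,F_m\rangle$ is a subcomplex of $\Delta$. Let $\Phi_1,\dots,\Phi_n$ be the $d$-path-connected components of $\Omega$ and let $c_j$ be the sum of the $d$-faces of $\Phi_j$, so $c=c_1+\cdots+c_n$. By the second half of Proposition~\ref{prop:ddimcycle_is_dcycle} each $\Phi_j$ is a $d$-dimensional cycle contained in $\Delta$ (and then, by its first half, each $c_j$ is a homological $d$-cycle). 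Since the $d$-boundaries form a $\Z_2$-subspace of $C_d(\Delta;\Z_2)$ and $c=\sum_j c_j\notin\im\partial_{d+1}$, some $c_j$ fails to be a $d$-boundary; that $\Phi_j$ is the desired $d$-dimensional cycle in $\Delta$.

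Finally, the case $d=0$ must be argued by hand, since Proposition~\ref{prop:ddimcycle_is_dcycle} does not literally cover it: here $\tilde H_0(\Delta;\Z_2)\neq 0$ precisely when $\Delta$ is disconnected, a $0$-dimensional cycle is exactly a set of an even number of vertices of $\Delta$, and $\sum_{v\in S}v\in\im\partial_1$ iff $S$ meets each connected component of $\Delta$ in an even number of vertices; these three facts give the equivalence at once. Overall I do not anticipate a real obstacle: the substance is already contained in Proposition~\ref{prop:ddimcycle_is_dcycle}, and what remains is the characteristic-$2$ base-change bookkeeping together with the elementary remark that if a sum of chains is not a boundary then one of the summands is not a boundary.
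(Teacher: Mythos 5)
Your proof is correct, and its overall architecture (reduce to $\Z_2$, then lean on Proposition~\ref{prop:ddimcycle_is_dcycle} for both implications) matches the paper's. The one place you genuinely diverge is the ``only if'' direction: the paper picks a non-bounding homological $d$-cycle whose support complex is \emph{minimal}, and uses that minimality to rule out a disconnection into two vertex-disjoint pieces (each piece would be a cycle, hence by minimality a boundary, hence their sum a boundary --- contradiction), concluding the support complex is $d$-path-connected and therefore a single $d$-dimensional cycle. You instead take an arbitrary non-bounding cycle, split it along the $d$-path-connected components of its support complex, observe each component is a $d$-dimensional cycle by Proposition~\ref{prop:ddimcycle_is_dcycle}, and use the fact that $\im\partial_{d+1}$ is a subspace to extract a component whose face-sum is not a boundary. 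Your version is slightly cleaner --- it avoids the minimality hypothesis and the two-block special case of what is really an $n$-block decomposition --- at the cost of nothing. You are also more careful than the paper on two peripheral points: you note explicitly that membership in $\im\partial_{d+1}$ is insensitive to the field extension $\Z_2\hookrightarrow k$ (needed so that ``not a $d$-boundary'' means the same thing over $k$ and over $\Z_2$, a point the paper leaves implicit behind its appeal to the Universal Coefficient Theorem), and you treat $d=0$ separately, which the paper silently ignores. Both of these are sound.
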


\begin{proof}
By application of the Universal Coefficient Theorem given in \cite[Theorem 3A.3]{Hatch02} and the property of faithful flatness we find that for any field $k$ of characteristic $2$ we have $\tilde{H}_d(\Delta;k) \neq 0$ if and only if $\tilde{H}_d(\Delta;\Z_2) \neq 0$.  Therefore we need only prove the theorem in the case that $k = \Z_2$.

Suppose that $\tilde{H}_d(\Delta;\Z_2) \neq 0$.  Then $\Delta$ contains a $d$-cycle $c$ that is not a $d$-boundary.  We may assume that the support complex of $c$ is minimal with respect to this property.  In other words no strict subset of the $d$-faces of $c$ has a sum that is also a $d$-cycle which is not a $d$-boundary.  First we would like to show that the support complex of $c$ is $d$-path connected.

Since we are in $\Z_2$, the $d$-cycle $c$ is of the form
\[
    c=F_1+\cdots +F_m
\]
for some $d$-faces $F_1,\ldots,F_m$ of $\Delta$.  Since $c$ is a $d$-cycle then applying the boundary map $\partial_d$ we have
\[
    0=\partial_d (F_1+\cdots +F_m) = \sum_{i=1}^m (e^{i}_1+\cdots +  e^{i}_{d+1})
\]
where $e^{i}_1,\ldots,e^{i}_{d+1}$ are the $d+1$ edges of dimension $d-1$ belonging to $F_i$.  If the support complex of $c$ is not $d$-path-connected then, without loss of generality, we can partition its set of $d$-faces into two sets, $\{F_1,\ldots,F_\ell\}$ and $\{F_{\ell+1}, \ldots,F_m\}$ such that these two sets have no $(d-1)$-faces in common.  Hence we must have
\[
    \sum_{i=1}^{\ell} (e^{i}_1+\cdots+e^{i}_{d+1})=0 \qquad \textrm{and} \qquad \sum_{i=\ell+1}^{m} (e^{i}_1+\cdots+e^{i}_{d+1})=0.
\]
In other words we have
\[
    \partial_d (F_1+\cdots+F_\ell) =  0 \qquad \textrm{and} \qquad \partial_d (F_{\ell+1}+\cdots+F_m)  = 0
\]
and so $F_1+\cdots+F_\ell$ and $F_{\ell+1}+\cdots+F_m$ are both $d$-cycles.  By our assumption of minimality these $d$-cycles are both $d$-boundaries.  Hence in $\Delta$ there exist $(d+1)$-faces $G_1,\ldots,G_r$ and $H_1, \ldots,H_t$ such that
\[
    \partial_{d+1}\left(\sum_{i=1}^r G_i\right)=F_1+\cdots+F_\ell \qquad \textrm{and} \qquad \partial_{d+1}\left(\sum_{i=1}^t H_i\right)=F_{\ell + 1}+\cdots+F_m.
\]
But then we have
\[
    \partial_{d+1}\left(\sum_{i=1}^r G_i+\sum_{i=1}^t H_i\right)=F_1+\cdots +F_m
\]
which is a contradiction since $F_1+\cdots +F_m$ is not a $d$-boundary.  Therefore the support complex of $c$ must be $d$-path-connected.  Hence, by Proposition \ref{prop:ddimcycle_is_dcycle} the support complex of $c$ is a $d$-dimensional cycle.  Hence $\Delta$ contains a $d$-dimensional cycle the sum of whose $d$-faces is not a $d$-boundary.

Conversely suppose that $\Delta$ contains a $d$-dimensional cycle with $d$-faces $F_1,\ldots,F_m$ such that $\sum_{i=1}^m F_i$ is not a $d$-boundary.  By Proposition \ref{prop:ddimcycle_is_dcycle} we know that $\sum_{i=1}^m F_i$ is a $d$-cycle.  It follows that $\tilde{H}_d(\Delta;\Z_2) \neq 0$.
\end{proof}

When we broaden the scope of our investigations to study simplicial homology over an arbitrary field we must keep in mind examples such as the triangulation of the real projective plane given in Figure \ref{fig:real_proj_plane}.  The simplicial homology of this complex changes significantly depending on the field under consideration.  In particular this complex has non-zero $2$-dimensional homology only over fields of characteristic 2.  As we saw in Section \ref{sec:ddimcycles} the triangulation of the real projective plane is an example of a non-orientable $2$-dimensional cycle.  It is this notion of orientability which leads us to a sufficient condition for a simplicial complex to have non-zero homology over any field.  First we see that orientable $d$-dimensional cycles are homological $d$-cycles over any field.

\begin{lem} [{\bf Orientable $d$-dimensional cycles are $d$-cycles}] \label{lem:orient_ddimcycle_is_dcycle}
The sum of the oriented $d$-faces of an orientable $d$-dimensional cycle is a homological $d$-cycle over any field $k$.
\end{lem}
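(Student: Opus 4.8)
The plan is to apply the boundary map $\partial_d$ directly to the chain $c = F_1 + \cdots + F_k$, where $F_1,\ldots,F_k$ are the oriented $d$-faces of the orientable $d$-dimensional cycle $\Omega$ (for some choice of orientations consistent with Definition \ref{def:orient_ddim_cycle}), and to show that the contribution of every $(d-1)$-face to $\partial_d(c)$ cancels.

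First I would record a reformulation of the boundary map in terms of induced orientations. For an oriented $d$-face $F = [v_0,\ldots,v_d]$ and a vertex $v_i \in F$, the term $(-1)^i[v_0,\ldots,\widehat{v_i},\ldots,v_d]$ appearing in $\partial_d(F)$ is, in the notation of the excerpt, exactly the $(d-1)$-face $F \setminus \{v_i\}$ equipped with the orientation \emph{induced} on it by $F$, with coefficient $+1$: when $v_i$ sits in an odd position the sign is $+1$ and the induced orientation is the restricted ordering, while when $v_i$ sits in an even position the sign is $-1$, which precisely converts the restricted ordering into the odd permutation prescribed by the definition of induced orientation. Hence $\partial_d(F) = \sum_{v \in F}[\,F \setminus \{v\}\,]_{\mathrm{ind}}$, where $[\,\cdot\,]_{\mathrm{ind}}$ denotes the induced orientation.

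Next I would sum this identity over the oriented $d$-faces of $\Omega$ and collect terms according to the underlying $(d-1)$-face. Fix a $(d-1)$-face $f$ of $\Omega$. Because $\Omega$ is a $d$-dimensional cycle, $f$ lies in an even number $2m$ of the $d$-faces $F_1,\ldots,F_k$, and each such $d$-face contributes to $\partial_d(c)$ exactly one copy of $f$ carrying the orientation it induces on $f$. By the definition of an orientable $d$-dimensional cycle (Definition \ref{def:orient_ddim_cycle}), for the chosen orientations of the $d$-faces these $2m$ induced orientations of $f$ are divided equally between the two orientation classes of $f$: relative to a fixed orientation $[f_0]$ of $f$, exactly $m$ of them equal $[f_0]$ and the other $m$ equal $-[f_0]$. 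Therefore the total contribution of $f$ to $\partial_d(c)$ is $m[f_0] + m(-[f_0]) = 0$ in $C_{d-1}(\Delta)$.

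Since this holds for every $(d-1)$-face of $\Omega$, we conclude $\partial_d(c) = 0$, so $c$ is a homological $d$-cycle; as the field $k$ was arbitrary, the statement follows. The only delicate point is the bookkeeping in the first step --- lining up the sign $(-1)^i$ of the boundary map with the parity condition in the definition of induced orientation --- but this is a direct check, and everything afterward is a regrouping argument driven by the two defining properties of an orientable $d$-dimensional cycle. Note that over a field of characteristic $2$ this is consistent with the first half of Proposition \ref{prop:ddimcycle_is_dcycle}, where orientations play no role.
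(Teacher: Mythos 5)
Your proposal is correct and follows essentially the same route as the paper: apply $\partial_d$ to the sum of the oriented $d$-faces, group the resulting $(d-1)$-dimensional terms by underlying face, and use the evenness condition together with the equal split of induced orientations to get cancellation. The only difference is that you make explicit the sign bookkeeping identifying each term $(-1)^i[v_0,\ldots,\widehat{v_i},\ldots,v_d]$ of $\partial_d(F)$ with the induced orientation of $F\setminus\{v_i\}$ taken with coefficient $+1$, a point the paper's proof passes over with a ``without loss of generality''; your check of that identification is accurate.
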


\begin{proof}
Let $\Omega$ be an orientable $d$-dimensional cycle with $d$-faces $F_1,\ldots,F_m$ and let
\[
    c= F_1+\cdots+F_m
\]
where $F_1,\ldots,F_m$ are given orientations consistent with Definition \ref{def:orient_ddim_cycle}.  For $1 \leq i \leq m$ let $e^{i}_1,\ldots,e^{i}_{d+1}$ be the $d+1$ faces of dimension $d-1$ belonging to $F_i$.  Applying the boundary map to $c$ we have, without loss of generality,
\begin{equation}\label{eq:sum_dfaces}
    \partial_d(c) = \sum_{i=1}^m\sum_{j=1}^{d+1}(-1)^{j+1}e^i_j.
\end{equation}
Notice that every $(d-1)$-face of $\Omega$ occurs an even number of times in (\ref{eq:sum_dfaces}) since $\Omega$ is a $d$-dimensional cycle.  Furthermore, because $\Omega$ is orientable the number of times that the $(d-1)$-face appears with a positive sign is equal to the number of times that it appears with a negative sign.  Hence we get $\partial_d(c) = 0$ and so $c$ is a homological $d$-cycle.
\end{proof}

\begin{thm}  [{\bf Orientable $d$-dimensional cycles give non-zero homology over all fields}] \label{thm:orientable_cycle_hom}
If a simplicial complex $\Delta$ contains an orientable $d$-dimensional cycle, the sum of whose oriented $d$-faces is not a $d$-boundary, then $\tilde{H}_d(\Delta;k) \neq 0$ for any field $k$.
\end{thm}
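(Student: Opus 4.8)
The plan is to produce an explicit nonzero element of $\tilde{H}_d(\Delta;k)$ directly from the given cycle; the substantive work has already been done in Lemma \ref{lem:orient_ddimcycle_is_dcycle}, so this theorem is essentially a formal consequence of it together with the definition of simplicial homology. Let $\Omega$ be the orientable $d$-dimensional cycle contained in $\Delta$, with $d$-faces $F_1,\ldots,F_m$ carrying orientations consistent with Definition \ref{def:orient_ddim_cycle} for which $c = F_1 + \cdots + F_m$ is not a $d$-boundary, as furnished by the hypothesis.

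First I would apply Lemma \ref{lem:orient_ddimcycle_is_dcycle} to conclude that $\partial_d(c) = 0$, i.e.\ $c$ is a homological $d$-cycle over $k$, so $c \in \ker \partial_d \subseteq C_d(\Delta)$. Since by hypothesis $c \notin \im \partial_{d+1}$, the coset $c + \im \partial_{d+1}$ is a nonzero element of $H_d(\Delta;k) = \ker \partial_d / \im \partial_{d+1}$, whence $H_d(\Delta;k) \neq 0$. For $d \geq 1$ this group coincides with $\tilde{H}_d(\Delta;k)$ by definition, so the conclusion follows immediately; the case $d = 0$ is degenerate and outside the intended scope, a $0$-dimensional cycle being merely an even vertex set.

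I do not expect any real obstacle here. In contrast to Theorem \ref{thm:cycle_hom_z2}, where one had to pass to a support complex minimal with respect to failing to be a boundary and then argue that $d$-path-connectedness is forced, the combinatorial content in the present situation --- that a consistent choice of orientations causes every $(d-1)$-face to cancel in the boundary sum --- is entirely packaged into the hypothesis that $\Omega$ is orientable and into Lemma \ref{lem:orient_ddimcycle_is_dcycle}. The only point deserving a sentence of care is that being a $d$-boundary is a statement about the \emph{particular} orientation of $\Omega$ supplied by the hypothesis, and it is exactly this oriented chain $c$, rather than some re-oriented version of it, that we exhibit as a nontrivial homology class.
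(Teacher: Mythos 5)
Your proof is correct and follows essentially the same route as the paper: invoke Lemma \ref{lem:orient_ddimcycle_is_dcycle} to see that the sum $c$ of the oriented $d$-faces is a homological $d$-cycle, then use the hypothesis that $c$ is not a $d$-boundary to conclude it represents a nonzero class. Your additional remarks on the quotient structure and on the orientation being the one supplied by the hypothesis are fine but not substantively different from the paper's argument.
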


\begin{proof}
\Comment{If $d=1$ then the statement follows from Theorem \ref{thm:1hom_graph_cycle} by observing that any orientable $1$-dimensional cycle contains a graph cycle by Lemma \ref{lem:contains_cycle}.

So suppose that $d>1$.  }Let $\Omega$ be the orientable $d$-dimensional cycle in $\Delta$ given by our assumption and with $d$-faces $F_1,\ldots,F_m$.  By Lemma \ref{lem:orient_ddimcycle_is_dcycle} we know that the $d$-chain $c= F_1+\cdots+F_m$ is a homological $d$-cycle where $F_1,\ldots,F_m$ are oriented according to Definition \ref{def:orient_ddim_cycle}.  By assumption, $c$ is not a $d$-boundary and so $\tilde{H}_d(\Delta;k) \neq 0$.
\end{proof}

The converse of this theorem does not hold as can be seen from the example of the triangulation of the real projective plane.  As mentioned above this simplicial complex has non-zero $2$-dimensional homology over any field of characteristic 2, but this complex contains no orientable $2$-dimensional cycles.  The whole simplicial complex is, however, a non-orientable $2$-dimensional cycle.  Thus far we have been unable to find any counter-examples to the converse of Theorem \ref{thm:orientable_cycle_hom} in the case that the field in question has characteristic 0.

A triangulation of the mod $3$ Moore space $\Delta$, shown in Figure \ref{fig:Mod3Moore}, is another interesting counter-example to the converse of Theorem \ref{thm:orientable_cycle_hom}.  First, this is an example of a simplicial complex which has non-zero $2$-dimensional homology only over fields of characteristic $3$.  Second, and even more of note, is that $\Delta$ contains no orientable or non-orientable $2$-dimensional cycles.  Notice that $\Delta$ is the support complex of a minimal $2$-dimensional homological cycle over fields of characteristic $3$, in the sense that removing any of its facets leaves a simplicial complex with no $2$-dimensional homology. Also, in some sense, $\Delta$ is quite close to being a $2$-dimensional cycle.  All but three of its $1$-faces each lie in exactly two $2$-dimensional faces.  The remaining three $1$-faces lie in three $2$-dimensional faces each.  These $1$-faces are $\{x,y\}$, $\{x,z\}$, and $\{y,z\}$.  The face $\{x,y,z\}$ is not a face of $\Delta$, but by adding this face to $\Delta$ we obtain the $2$-dimensional cycle shown in Figure \ref{fig:Mod3Moore_plus}.

\begin{figure}[h!]
{\centering
    \includegraphics[height=1.7in]{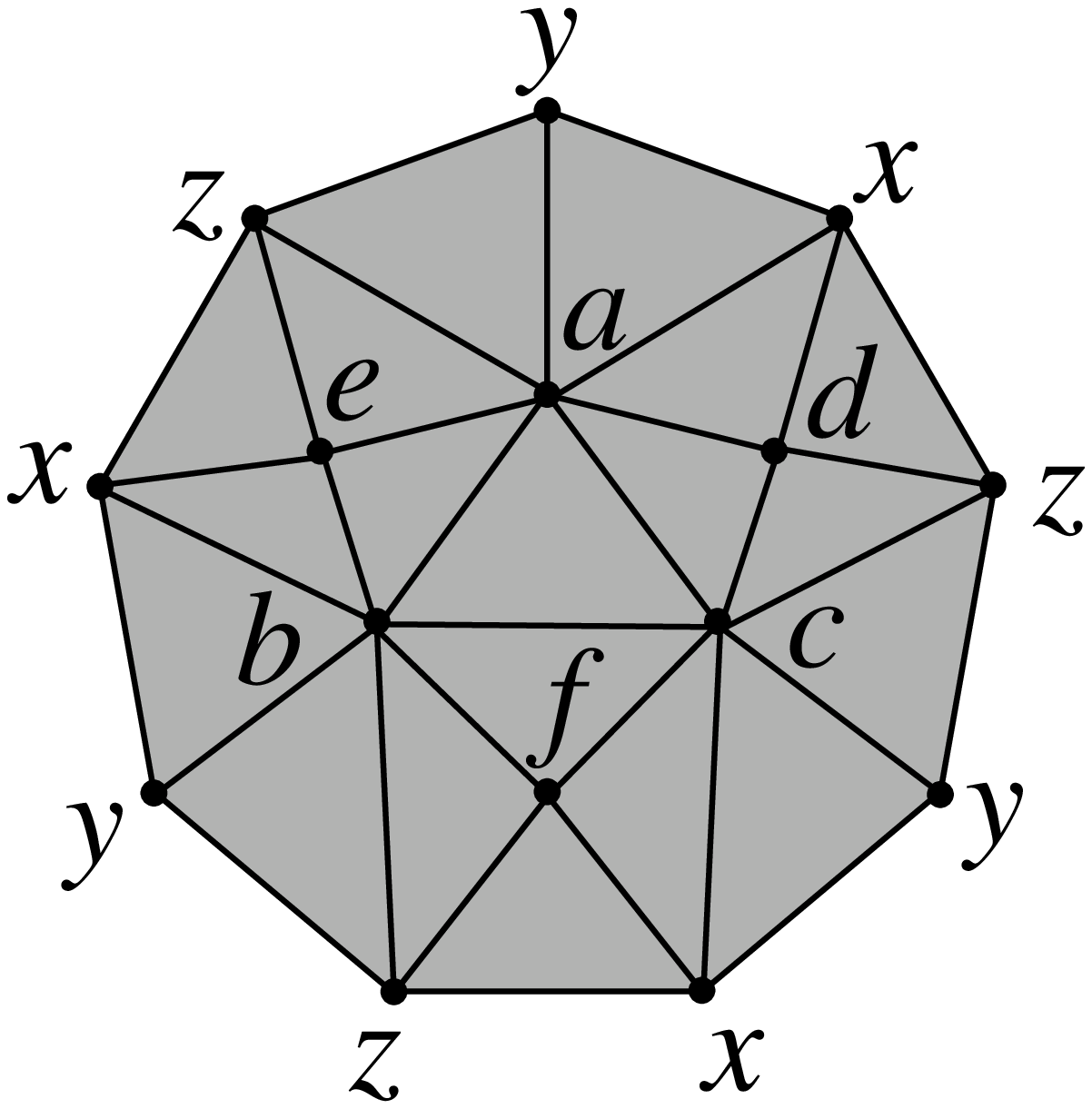}
    \caption {A triangulation of the mod $3$ Moore space.} \label{fig:Mod3Moore}
}
\end{figure}

\begin{figure}[h!]
{\centering
    \includegraphics[height=1.7in]{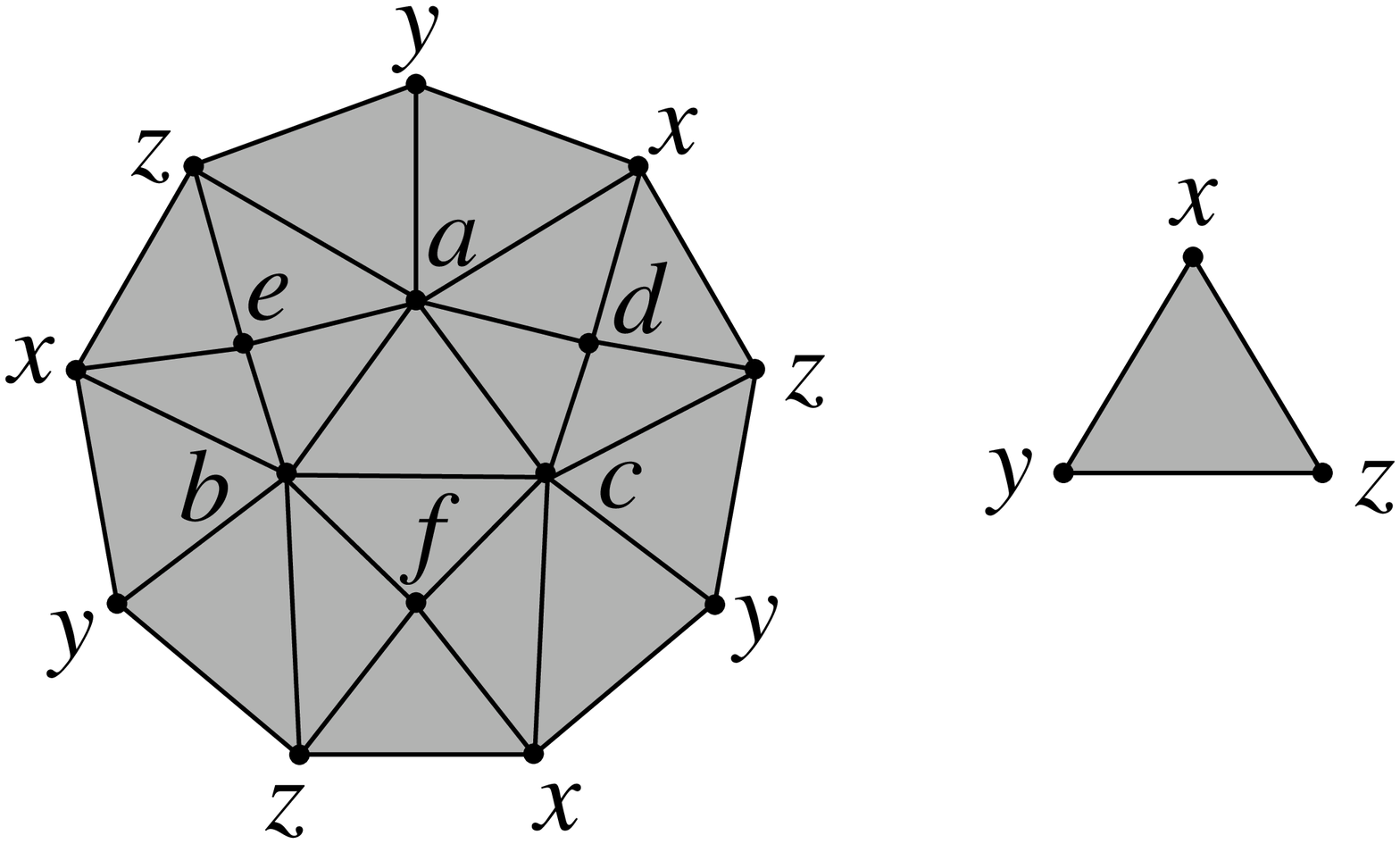}
    \caption {Modification of the triangulated mod $3$ Moore space.} \label{fig:Mod3Moore_plus}
}
\end{figure}

It is clear from the example of the mod $3$ Moore space that one may have non-zero homology over fields of finite characteristic without having $d$-dimensional cycles present.  However, thus far we have been unable to find any counter-examples to the converse of Theorem \ref{thm:orientable_cycle_hom} in the case that the field in question has characteristic 0.

The difficultly in trying to prove the converse of Theorem \ref{thm:orientable_cycle_hom} for the general case is in linking the algebraic notion of a homological $d$-cycle with general field coefficients to the combinatorics of its underlying support complex.  In particular, when the field coefficients of a $d$-chain are not all equal in absolute value it is difficult to find a combinatorial description of the role played by the coefficients.  Over $\Z$ it seems that many ``minimal'' homological $d$-cycles have coefficients which are constant up to absolute value.  Unfortunately the simplicial complex in Figure \ref{fig:Mod3Moore_plus} does not satisfy this property.  In this case, as a homological $2$-cycle over $\Z$, the coefficients of all facets but $\{x,y,z\}$ can be chosen to be $1$ in absolute value whereas the coefficient of $\{x,y,z\}$ must be $3$.


\bibliography{thesis_bib}

\end{document}